\newcommand{\Z}{\mathbb{Z}}
\newcommand{\R}{\mathbb{R}}
\newtheorem{df}{Definition}[section]
\newtheorem{lem}[df]{Lemma}
\newtheorem{pro}[df]{Proposition}
\newtheorem{thm}[df]{Theorem}
\begin{document}

\title{Convergence of a discrete selection-mutation model with exponentially decaying mutation kernel to a Hamilton-Jacobi equation}

\author{\large{Anouar Jeddi$^{\ast}$}\\
\small{$^{\ast}$CMAP, École polytechnique, Institut polytechnique de Paris,}\\
    \text{anouar.jeddi@polytechnique.edu}
}

\maketitle

\newcommand{\K}{\varepsilon}

 \renewcommand{\thesection}{\arabic{section}}
 \renewcommand{\theequation}{\arabic{equation}}

\begin{abstract}
    In this paper we derive a constrained Hamilton-Jacobi equation with obstacle from a discrete non-linear integro-differential model of population dynamics, with exponentially decaying mutation kernel. The exponential decay of the kernel leads to a modification of the classical Hamilton-Jacobi equation obtained previously from continuous models in \cite{BMP}. We consider a population composed of individuals characterized by a quantitative trait, subject to selection, mutation and competition. In a regime of small mutations, small spatial discretization step and large time we prove that the WKB transformation of the density converges to a viscosity solution of a constrained Hamilton-Jacobi equation with obstacle.         
\end{abstract}
\textbf{Keywords:} Integro-differential equation, Hamilton-Jacobi equation, viscosity solutions, population dynamics. 
\section{Introduction}
In this paper, we are interested in the asymptotic analysis in a regime of small mutations and large time of a discrete version of the non-local integro-differential equation  
\begin{align}
\label{eq:selction-muation}
    \begin{cases}
        \partial_t n(t,x)=R(x,I(t))n(t,x)+\int_{\R}p(y)G(y-x)n(t,y)dy,~\forall(t,x)\in (0,+\infty)\times \R,\\
        I(t)=\int_{\R}n(t,y)dy ~~~\forall t>0,\\
        n(0,.)=n^0(.).
\end{cases}
\end{align}
with a kernel $G(.)$ that has exponential decay. This analysis leads to a non-standard Hamilton-Jacobi equation.\\

The equation \eqref{eq:selction-muation} belongs to a class of models arising in 
 eco-evolutionary dynamics of populations structured by quantitative traits and subject to selection, mutation and competition(see, for instance \cite{OPSB,BMP,PB,LMP,CJ}). Here,  $n(t,x)$ denotes the phenotypic density at time $t$ and trait $x,$ $R(.,.)$ is  the growth rate which depends on traits and the total population size $I,$  $p$ is the mutation rate, and $G$ is a probability kernel of mutation effects. An asymptotic approach, involving Hamilton-Jacobi equations, has indeed been widely developed to study models from evolutionary biology in the small mutational variance regime \cite{OPSB,  BMP,PB}. In these works the Hamilton-Jacobi approach was usually carried out for mutations kernel $G$ with fast decay. Biologically it is interesting to consider a kernel with  slow decay (see \cite{BW}), since it allows us to take into account large mutation jumps with a high rate. In this work, we extend the previous approach to kernels with slow decay.\\   
 
 We briefly recall the standard analysis leading to Hamilton-Jacobi equations. A particular regime of interest is when the mutations have small effects. To take small mutational effects into account, one replaces $G(.)$ by $\frac{1}{\varepsilon}G(\frac{.}{\varepsilon})$, considering a mutational variance of order $\varepsilon
^2$. A small mutational variance leads to a small phenotypic variance in the population, and hence to slow dynamics of the phenotypic density. In order to capture the effects of mutations in such dynamics one accelerates the time by replacing $t$ by $t/\varepsilon$. Then, \eqref{eq:selction-muation} becomes 
\begin{align}
\label{eq:scaling selction-muation}
\begin{cases}
        \partial_t n_\varepsilon(t,x)=\frac{1}{\varepsilon}R(x,I_\varepsilon(t))n_\varepsilon(t,x)+\frac{1}{\varepsilon}\int_{\R}p(y)\frac{1}{\varepsilon}G\big(\frac{y-x}{\varepsilon}\big)n_\varepsilon(t,y)dy,~~~~\forall(t,x)\in (0,+\infty)\times \R,\\
        I_\varepsilon(t)=\int_{\R}n_\varepsilon(t,y)dy, ~~~\forall t>0,\\
    n_\varepsilon(0,.)=n^{\K,0}(.).
        \end{cases}
\end{align}
The solution $n_\varepsilon$ to such an equation typically concentrates around an evolving dominant trait $\bar x(t)$. In order to characterize such concentration phenomenon  it is convenient to perform a Hopf-Cole transformation $u_\varepsilon(t,x)=\varepsilon \log{n_\varepsilon(t,x)}.$ Such a transformation unfolds the singularity of the phenotypic density which is close to a Dirac mass, since $u^\varepsilon$ will be close to a continuous function, solving a Hamilton-Jacobi equation. It is indeed proved in \cite{PB,BMP} assuming that $G$ has super exponential decay at infinity, that as $\varepsilon\to 0$, $u^\varepsilon$ converges locally uniformly to the viscosity solution of the following Hamilton-Jacobi equation with constraint.
\begin{equation}
\label{eq:constrained HJ}
    \begin{cases}
        \partial_t u(t,x)=R(x,I(t))+p(x)\int_{\R}G(y)e^{\nabla u(t,x).y}dy,~~~~(t,x)\in (0,+\infty)\times \R,\\
        \max_{x\in \mathbb{R}}(t,x)=0,~~\forall t>0,\\
        u(0,.)=u^0(.),
    \end{cases}
\end{equation}
where $I$ is a function of bounded variation, given as the almost everywhere limit of $(I^\varepsilon)_\varepsilon$.\\
The limit density $n$ satisfies
$\mathrm{supp}\, n \subset \{ u = 0 \}$. Therefore, a strategy to prove that $n$ concentrates around Dirac masses is to show that the set $\{ u = 0 \}$ consists of a finite number of points. The solution $(u,I)$ to the problem above has been proven to be unique in \cite{CL} (see also \cite{SJM} for a uniqueness result for a closely related model).\\

The analysis of \cite{BMP} is conducted under a mutation kernel with finite exponential moments of any order, which implies that the Hamiltonian 
\begin{equation}
\label{eq:Hamiltonian}
H(x,q)=-R(x,I)-p(x)\int_{\R}G(y)e^{q.y}dy,
\end{equation}
is finite for any values of $q$. Similar results have been  obtained in models where mutations are modeled by a diffusion term (see \cite{BMP,PB}).\\

However, when we consider a kernel which  does not decay faster than exponential, Equation \eqref{eq:constrained HJ} may not be well-defined. The reason is that the Hamiltonian can take infinite values. The analysis changes drastically, since when the kernel does not decay faster than exponentially, the standard regularity theory normally used for such models does not apply. \\
Few works have addressed kernels with slow decay. The case of mutations represented by a fractional Laplacian was studied in \cite{Sepideh} (see also \cite{MM} where the relevant scaling was introduced).  In \cite{BGHP} some other fat-tailed kernels have been considered and relevant scalings have been introduced. However, convergence has only been proved under strong assumptions that keep the solution in the domain of the Hamiltonian, making the analysis much simpler.\\

Stochastic methods, based on individual-based models, have also been developed to study the evolutionary dynamics of populations (see, for instance, \cite{CFM,2CFM,CMT}). These stochastic works lead in particular to the justification of deterministic models \cite{BMP}. Hamilton-Jacobi equations presented above have been usually derived from individual-based models in two steps: the first one is to derive deterministic integro-differential equations such as \eqref{eq:selction-muation} from individual-based models  in the limit of a large population \cite{CFM,2CFM}, the second step is to derive Hamilton-Jacobi equations from integro-differential models in the regime of a small mutation variance as explained above. Recently, a direct derivation of the Hamilton-Jacobi equation from an individual-based model has been obtained in a limit combining large population and small mutations in \cite{CMMT}. This approach relies on a discretization of the trait space with a small parameter that tends to zero as the carrying capacity tends to infinity. To perform the Hopf-Cole transformation of the phenotypic density in a stochastic finite population, it is indeed more convenient to consider a discrete model.\\

In this work, we address simultaneously two aspects: the main one is the extension of the class of mutation kernels to those with only exponential decay, the second one concerns the treatment of the discrete version of   \eqref{eq:scaling selction-muation}, motivated by the study of a stochastic individual-based model in the same spirit as in \cite{CMMT}. The deterministic model that we study in this article corresponds indeed to the limit of large population of the corresponding individual-based model. The current work is therefore a first step to the asymptotic analysis of such an individual-based model. Note also that one can use the same method presented in this article, with less technical difficulties, to study the corresponding continuous model.\\

The main difficulty in this work is that for an exponentially decaying mutation kernel $G$ the Hamiltonian \eqref{eq:Hamiltonian} can take infinite values for some values of $q$. Another difficulty arises from the fact that the population density is known only on a grid, hence introducing difficulties in the analysis of the convergence to a viscosity solution in a continuous trait space. To prove the convergence of the WKB transformation of the density, we combine the method of semi-relaxed limits (see \cite{G,GB-srl}) with Lipschitz estimates in space. The methods based on semi-relaxed limits and Lipschitz estimates are often used alternatively and in classical works one of the methods is usually sufficient. Note that Lipschitz estimates in space are not sufficient here, because we lack regularity in time due to the non finite nature of the Hamiltonian. Here we need to combine the two methods: we use the semi-relaxed limits in a non-classical viscosity procedure, to overcome the difficulty coming from the Hamiltonian with infinite values, and the Lipschitz estimates in $x$ to deal with the discrete aspect of the problem. Note also that here we apply the method of semi-relaxed limits in a rather non-standard manner, closely related to \cite{Sepideh,Chasseine} which also deal with Hamiltonians with possible infinite values. The article \cite{Sepideh} studies a closely related continuous model where the mutations are represented by a fractional Laplace term, and \cite{Chasseine} studies some large deviation type estimates for some non-local models.\\

From a numerical point of view, our work can be related to the discretization of Hamilton–Jacobi equations. In addition to the space discretization studied here, it would be possible to introduce a time discretization, leading to an original numerical scheme whose discretization step depends on the mutation parameter and becomes simultaneously small as the mutation size tends to zero.
This scheme should converge to the corresponding Hamilton–Jacobi equation in a limit that combines both the time and space discretization steps and the mutation parameter.
Note that for models with Gaussian kernels or Laplace mutation operator, asymptotic-preserving numerical schemes have already been proposed in \cite{CHY,GH}, where the discretization step does not need to vanish as the mutation parameter tends to zero.
The analysis developed in this paper provides a basis for extending the analysis in \cite{CHY,GH}, to the study of such numerical schemes in the case of kernels with slow decay.\\

In section 2, we describe the  discrete model, the parameter scaling, and we state our main result. The remaining of this paper is dedicated to the proof of this result, we refer to the end of section 2 for a detailed plan of the proof.
\section{Model and main result}
\subsection{The model}
 We consider a discretization parameter $\delta_\varepsilon \rightarrow 0$ as $\varepsilon \rightarrow 0$. The trait space with discretization $\delta_\K$ is given by $\mathcal{X}_\varepsilon := \{i\delta_\varepsilon,~i \in \mathbb{Z}\}$. We consider a discrete version $(n^\K_i(t))_{i\in \Z}$  of the rescaled selection-mutation model \eqref{eq:scaling selction-muation} in $\mathcal{X}_\varepsilon,$  where $n^\K_i(t)$ is the population density at time $t$ and trait $i\delta_\K,$
 we obtain the following rescaled problem: $\forall (t,i)\in (0,+\infty)\times \mathbb{Z},$
\begin{equation}
\label{eq:rescaling model}
    \begin{cases}
       \varepsilon \frac{d}{dt}n^\K_i(t)=R(i\delta_\K,I^\K(t))n^\K_i(t)+\sum_{l\in \mathbb{Z}}p((l+i)\delta_\K)\frac{\delta_\K}{\K}  G(lh_\K)n^\K_{l+i}(t),\\
       I^\K(t)=\sum_{i\in \mathbb{Z}}\delta_\K n^\K_i(t),\\
        n^\K_i(0)=n^{\K,0}(i\delta_\K),
    \end{cases}
\end{equation}
where $n^{\varepsilon,0}$ is the initial population density, which is assumed to vanish for large traits as the mutation parameter tends to zero. Moreover,
we assume that \begin{equation*}
     \delta_\K\ll \K,
 \end{equation*} 
 so that the trait discretization step allows us to observe mutations with sizes of the order of $\K.$  
 Then, $h_\K:=\frac{\delta_\K}{\K}\rightarrow 0.$\\
Following earlier works \cite{BMP,PB,CMMT}, we expect a concentration phenomenon in the form of Dirac masses. To analyze the density $n^\varepsilon,$ we perform the classical Hopf-Cole transformation 
\begin{equation*}
\label{Hop-Col}
u_i^{\K}(t)=\varepsilon \log{n_i^\K(t)},~~~~~~n^\varepsilon_i(t)=e^{\frac{u^\varepsilon_i(t)}{\varepsilon}}.
\end{equation*}
We obtain  the following  system of ordinary differential equations:
\begin{equation}
\label{eq:system of ODE}
\begin{cases}
  \frac{d}{dt}u^{\K}_i(t)=R(i\delta_\K,I^\K(t))+\sum_{l\in\mathbb{Z}}p((l+i)\delta_\K)h_\K G(lh_\K)e^{\frac{1}{\K}(u^{\K}_{l+i}(t)-u^{\K}_i(t))},~\forall(t,i)\in (0,+\infty)\times\mathbb{Z}, \\
  I^\K(t)=\sum_{i\in \mathbb{Z}}\delta_\K n^\K_i(t),~~~\forall t>0,\\
  u^\K_i(0)=u^{\K,0}(i\delta_\K).
  \end{cases}
\end{equation}

Inspired by the continuous setting, one expects that in the limit $\K\rightarrow 0$, a Hamilton-Jacobi equation, set on a continuous trait space, will be obtained. Following \cite{CMMT}, we define, the affine continuous interpolation of $u^\K_i,$ on the non-discretized trait space $\R$. For all $x\in \mathbb{R},$ let $i\in \mathbb{Z}$ be such that $x\in [i\delta_\K,(i+1)\delta_\K),$ we define 
\begin{equation}
\label{eq:interpolation}
    \widetilde{u}^\K(t,x)=u^\K_i(t)(1-\frac{x}{\delta_\K}+i)+u^\K_{i+1}(t)(\frac{x}{\delta_\K}-i).
\end{equation}
Our goal is to study the convergence of the function $\widetilde{u}^\K$ as $\K\rightarrow 0.$
\subsection{Assumptions}
   \begin{enumerate}
        \item \label{item:1} There exist two positive constants $I_m$ and $I_M$ such that 
        \begin{equation}
            \label{eq:growth bound }
            \inf_{x\in \R} \big(R(x,I_m)+p(x)\big)=0,~~~~~~\sup_{x\in \R}\big(R(x,I_M)+p(x)\big)=0,
        \end{equation}
        and \begin{equation}
        \label{eq:intitial size population bound}
            I_m/2\leq I^\K(0)\leq 2I_M,~~\forall \K>0.
        \end{equation}
       \item \label{item:2} We assume that $R$ and $p$  are Lipschitz-continuous functions, in space with Lipschitz norms $\Vert R \Vert_{\text{Lip}}$ and $\Vert p \Vert_{\text{Lip}}$ uniformly w.r.t $I\in [I_m/2,2I_M]$, and that there exist positive constants $\overline{R},$ $\overline{p},$ and $\underline{p},$ such that for any $x\in \R,$
       \begin{equation}
       \label{growth canditions}
  \sup_{I\in [I_m/2,2I_M]}\Vert R(.,I)\Vert_{W^{2,\infty}(\R)} \leq \overline{R} \quad \text{and} \quad 0 < \underline{p} \leq p(x)\leq \Vert p\Vert_{W^{2,\infty}(\R)} \leq   \overline{p}.
       \end{equation}
      \item \label{item:3} The growth rate $R(.,.)$ is differentiable w.r.t $I$ for all $x\in \R,$ and
      there exist positive constant $A_1, A_2$ and $A_3$ such that for any $I,I_1,I_2\in \R^+$ and $x\in \R$
      \begin{equation}
      \label{eq:growth monotonie in I}
        \vert R(x,I_1)-R(x,I_2)\vert \leq A_1\vert I_1-I_2\vert,~~~-A_2\leq \partial_IR(x,I)\leq -A_3.
 \end{equation}
        
        \item \label{item:4} The kernel $G$ has an exponential decay, i.e., $G(x)=f(x)e^{-\vert x\vert },$  such that  $\int_{\mathbb{R}}G(y)dy=1,$
         $G$ is a positive continuous function in $\R,$ and for all $x\in \R$ \begin{equation} f(x)=f(-x),~~~~~
            0<\min\limits_{x\in \R}f(x)\leq f(x)\leq \sup_{x\in \R}f(x)<+\infty,
            \end{equation}
  which implies that 
     \begin{equation}
           \label{kernel conditions} \int_{\mathbb{R}}G(y)e^{ay}dy<+\infty~~\forall \vert a\vert<1,~\text{and}~~\int_{\mathbb{R}}G(y)e^{ay}dy=+\infty ~\forall \vert a\vert \geq 1.
        \end{equation}
            
   \item \label{item:5} There exist  positive constants $A$ and $B_1$ such that, for any $i\in \mathbb{Z}$ and $\K>0$
    \begin{equation}
    \label{eq:sublinear growth initial conditon}
        u^{\K,0}(i\delta_\K)\leq -A\vert i\delta_\K\vert +B_1.
    \end{equation}
   \item \label{item:6} There exists a positive constant  $L<1$ such that,  for any $i\in \mathbb{Z}$  and $\K>0,$
    \begin{align}
    \label{eq:Lipshitz conditon}
        \left\vert \frac{u^{\K,0}((i+1)\delta_\K)-u^{\K,0}(i\delta_\K)}{\delta_\K}\right \vert \leq L,
        \end{align}
        i.e. for any $\K> 0,$ $u^{\K,0}$ is $L$-Lipschitz continuous uniformly in $\varepsilon,$ in the discrete trait space $\mathcal{X}_\K$.
    \item \label{item:7} The linear interpolation $\widetilde{u}^{\K,0}$ of $u^{\K,0},$ defined as in \eqref{eq:interpolation}, converges locally uniformly to a continuous function $u^0,$ as $\K\rightarrow 0$.
\end{enumerate}
From Assumption \ref{item:1}, the population evolves under limited resources: when its total size is large, the growth rate is negative for all traits, whereas when the total size falls below a minimal threshold, the growth rate is positive for all traits. The second part of Assumption \ref{item:3} models competition. As the population size increases, the growth rate decreases for all traits.  Assumption \ref{item:4} is the new assumption in this setting. The mutation kernel is assumed to have only exponential decay, instead of the super exponential decay considered in the literature.  An example of Kernel $G$ satisfying Assumption \ref{item:4} is given by the double exponential kernel $G(x)=\frac{1}{2}e^{-\vert x\vert }.$ The other assumptions are technical.


\subsection{Main result}
This section is devoted to the statement of our main result.
\begin{thm}
    Under Assumptions \ref{item:1}-\ref{item:7}, let $u^\K=(u^\K_i)_{i\in\mathbb{Z}}$ be the solution of \eqref{eq:system of ODE} and $\widetilde{u}^\K$ as in \eqref{eq:interpolation}. Then, as $\K\rightarrow 0,$ along a subsequence, $(I^\varepsilon)_\varepsilon$ converges almost everywhere to a function of bounded variation $I,$ and $\widetilde{u}^\K$ converges locally uniformly to a function $u\in C([0,+\infty)\times \mathbb{R}),$  viscosity solution to the following equation 
    \begin{equation}
    \label{eq:HJ}
    \begin{cases}
      \min\Big( \partial_tu(t,x) -R(x,I(t))-p(x)\int_{\mathbb{R}}G(y)e^{\nabla u(t,x).y} dy, 1-\vert \nabla u(t,x)\vert \Big)=0,~~\forall (t,x)\in(0,+\infty)\times  \mathbb{R},\\
       \max_{x\in \mathbb{R}}u(t,x)=0,~~\forall t>0,\\
      u(0,.)=u^0(.).
      \end{cases}
    \end{equation}
    Moreover, along a subsequence, the linear interpolation  of $(n^\K_i)_{i\in \Z},$ denoted by
 $\widetilde{n}^\K,$ converges in $L^\infty(w*(0,+\infty),\mathcal{M}^1(\mathbb{R}))$ to a measure $n$ which satisfies  for almost every $t>0,$  
    \begin{equation}
    \label{eq:concentration}
    \text{supp}~n(t,.)\subset \{x\in \mathbb{R}/ u(t,x)=0\}.
    \end{equation}
 \end{thm}
To the best of our knowledge, such a Hamilton-Jacobi equations with obstacle is new in the field of models from evolutionary biology. It results from the exponential decay of the kernel. Equation \eqref{eq:HJ} has the form of the classical Hamilton-Jacobi equation  $$\partial_t u(t,x) -R(x,I(t))-p(x)\int_{\mathbb{R}}G(y)e^{\nabla u(t,x).y} dy=0,$$  when $\vert \nabla u \vert<1.$ The fact that, the Hamiltonian takes infinite values when $\vert \nabla u\vert \geq 1$,  enforces $u$ to satisfy $\vert \nabla u\vert \leq 1.$\\ 

 We expect that \eqref{eq:HJ} admits a unique solution $(u,I)$ with $u\in C(\R^+\times \R)$ solving \eqref{eq:HJ} in the viscosity sense and $I\in BV_{Loc}(\R^+)$. Such a uniqueness result has been proved for constrained Hamilton-Jacobi equations without obstacle, derived from models with fast decaying mutation kernels \cite{CL}. The extension of such  uniqueness results to \eqref{eq:HJ}, requires some work since here  the Hamiltonian can take infinite values and the problem involves an obstacle of type $|\nabla u|\leq 1$. This goes beyond  the scope of this article.\\
 However, a uniqueness result for fixed $I$ can be obtained using classical arguments combining the doubling variables method (see for instance \cite{G}) with the notion of viscosity solutions for $L^1$ time dependence Hamiltonians (see \cite{Ishi}), following similar techniques used in the proof of Proposition 5.4.\\
 
The remaining of this paper is dedicated to the proof of Theorem 2.1: In Section 3, we present results regarding the existence, uniqueness, and comparison principle for the equation \eqref{eq:rescaling model}. In Section 4, we establish the local bounds and Lipschitz estimates of the solutions \eqref{eq:system of ODE}, which are crucial ingredients in proving convergence. Sections 5 and 6 are dedicated to the proof of Theorem 2.1.
\section{Preliminary results and definitions}
In this section, we give some preliminary results concerning the discrete equation \eqref{eq:rescaling model}, and the definition of the viscosity solution for a discontinuous Hamiltonian.  
\begin{lem}
  For any $a\in(-1,1),$ the sum $\sum_{l\in \Z}h_\K G(l h_\K )e^{a \vert lh_\K\vert  }$ converges to $\int_{\R}G(y)e^{a\vert y \vert }dy$ as $\K\rightarrow 0,$ and there exists a finite constant $\alpha(a)$ such that 
  \begin{equation}
      \sup_{\K>0} \sum_{l\in \Z}h_\K G(lh_\K)e^{a\vert lh_\K \vert }=:\alpha(a).
  \end{equation}
\end{lem}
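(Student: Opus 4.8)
The plan is to treat the sum $\sum_{l\in\Z} h_K G(h_K l) e^{a h_K|l|}$ as a Riemann sum for the integral $\int_{\R} G(y) e^{a|y|}\,dy$, which is finite by Assumption~\ref{item:2} since $|a|<1$ (apply \eqref{kernel conditions} to the positive and negative half-lines separately, using that $e^{a|y|} \le e^{ay} + e^{-ay}$). The function $y \mapsto G(y) e^{a|y|} = f(y) e^{-(1-|a|)|y|}$ is continuous and, crucially, dominated by $\big(\sup_x f(x)\big) e^{-(1-|a|)|y|}$, which is integrable and decaying. So the strategy splits into two parts: first show the sum is uniformly bounded in $K$, then show it converges to the integral.

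First I would prove the uniform bound. Write $M := \sup_{x\in\R} f(x) < \infty$, so that $h_K G(h_K l) e^{a h_K |l|} \le M\, h_K\, e^{-(1-|a|) h_K |l|}$. Summing the geometric series over $l\in\Z$ gives
\begin{equation}
\sum_{l\in\Z} h_K e^{-(1-|a|)h_K|l|} = h_K \cdot \frac{1 + e^{-(1-|a|)h_K}}{1 - e^{-(1-|a|)h_K}},
\end{equation}
and since $1 - e^{-(1-|a|)h_K} \ge (1-|a|)h_K - \tfrac12((1-|a|)h_K)^2 \ge c\,h_K$ for $h_K$ small (with $c$ depending only on $a$ and an upper bound for $h_K$), the right-hand side is bounded by a constant depending only on $a$. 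Taking the supremum over $K\ge 1$ (the finitely many large values of $h_K$, if any, give finite sums as well since the series converges for each fixed $K$) yields a finite $\alpha(a)$. This is the clean part.

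The main point is the convergence. I would compare the sum with the integral by writing $\int_{\R} G(y)e^{a|y|}\,dy = \sum_{l\in\Z}\int_{h_K l}^{h_K(l+1)} G(y) e^{a|y|}\,dy$ and estimating the difference termwise: on each interval $[h_K l, h_K(l+1))$, the oscillation of the continuous integrand is controlled, and one bounds $\big| h_K G(h_K l)e^{a h_K|l|} - \int_{h_K l}^{h_K(l+1)} G(y)e^{a|y|}dy \big|$. The subtlety is that there are infinitely many terms, so a naive "continuity on a compact set" argument does not immediately close; the fix is to exploit the exponential tail. Fix $\eps>0$; choose $R_0$ so large that both the tail of the integral $\int_{|y|>R_0} G(y)e^{a|y|}dy$ and the tail of the sum $\sum_{|l|h_K > R_0} h_K G(h_K l) e^{a h_K |l|}$ (uniformly in $K$, via the geometric bound above) are $<\eps$; then on the compact region $|y|\le R_0 + 1$ use uniform continuity of $G(\cdot)e^{a|\cdot|}$ to make the finitely many remaining terms close to the corresponding integrals, for $K$ large. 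I expect the tail-uniformity step — making the tail of the Riemann sum small uniformly in $K$ — to be the only place requiring care, and it is exactly what the geometric estimate from the first part provides. Combining the three contributions ($<3\eps$) gives the convergence, and this convergence also confirms that the supremum $\alpha(a)$ is attained as a genuine finite quantity (it is at least the limit $\int_\R G(y)e^{a|y|}dy$).
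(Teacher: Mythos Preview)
Your proof is correct and follows essentially the same approach as the paper: truncate at a large radius, use Riemann-sum convergence of the continuous integrand on the compact part, and control the tail uniformly in $K$ via the exponential decay $G(y)e^{a|y|}=f(y)e^{-(1-|a|)|y|}$. Your explicit geometric-series computation for the uniform bound $\alpha(a)$ is a bit more detailed than the paper (which leaves this implicit in the convergence), but the structure is the same.
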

\begin{proof}
    Let $a\in(-1,1),$ and $M>0.$ By continuity of $G,$ the Riemann sum $\sum_{\vert lh_\K\vert \leq M}h_\K G(l h_\K)e^{a \vert lh_\K \vert }$ converges to $\int_{\vert x\vert\leq  M}G(x)e^{a\vert x \vert}dx.$
Moreover, by Assumption \ref{item:4}, we have 
\begin{align*}
    \sum_{ \vert lh_\K\vert >M }h_\K G(l h_\K)e^{a  \vert lh_\K \vert }&\leq C \sum_{ \vert lh_\K\vert >M}h_\K e^{-\vert lh_\K\vert} e^{a  \vert lh_\K \vert}\\
    & \leq C \int_{\vert y\vert \geq M}e^{-(1-a)\vert y\vert}dy.
\end{align*}
Then 
$$\limsup\limits_{\K\rightarrow 0}   \sum_{ \vert l h_\K \vert >M}h_\K G(l h_\K)e^{a  \vert l h_\K \vert}\leq C \int_{\vert y\vert \geq M}e^{-(1-a)\vert y \vert}dy. $$
Letting $M$ tend to infinity, we deduce the convergence of the entire sum $\sum_{l\in \Z}h_\K G(l h_\K)e^{a \vert lh_\K \vert }$ to $\int_{\R}G(y)e^{a\vert y\vert}dy,$ as $\K\rightarrow 0.$

\end{proof}
\begin{thm}
 Under Assumptions \ref{item:1}-\ref{item:5}, for all $\K>0,$ there exists a unique solution \\$n^\K\in C^1(\R^+,\ell^1(\mathbb{Z}))$ to Equation \eqref{eq:rescaling model} which satisfies, for all $t\geq 0,$ 
\begin{equation}
\label{eq:bound of totale size}
   I_m/2\leq I^\K(t)\leq 2I_M.
\end{equation}  
\end{thm}
See Appendix A for the proof.
\begin{pro}
 The sequence of functions  $(I^\K)_\K$ is locally uniformly of bounded variation, and we have for all $T>0,$
  \begin{equation}
    \int_{0}^{T}  \left| \frac{dI^\K(t)}{dt} \right| dt\leq  C(T),
  \end{equation}
  where $C(T)$ is a positive constant depending on $T$ but not on $\K$. Then, along subsequences, $(I^\K)_\K$ converges almost everywhere to a function $I,$ which is locally of bounded variation and non-decreasing in $(0,+\infty)$. 
\end{pro}
The proof is given in Appendix A. 
\begin{pro}
   For any fixed $\K>0$ and $I^\K\in \R^+,$ Equation \eqref{eq:rescaling model} satisfies the comparison principle for the class of functions $u \in C^1(\R^+,\ell^1(\mathbb{Z})),$ such that 
   \begin{equation}
   \label{eq:localy bound condition}
      \sup_{t\in[0,T]}\sum_{i\in \Z}\vert u_i(t)\vert<+\infty,~~\forall T>0.
        \end{equation}
        By this we mean that, if $v \in C^1(\R^+,\ell^1(\mathbb{Z}))$ is a subsolution of \eqref{eq:rescaling model} satisfying \eqref{eq:localy bound condition}, and $w\in C^1(\R^+,\ell^1(\mathbb{Z}))$ is a supersolution of \eqref{eq:rescaling model} satisfying \eqref{eq:localy bound condition},
   such that $v(0,.)\leq w(0,.)$, then we have  $$v(t,.)\leq w(t,.),~~~ \forall t\geq 0.$$
\end{pro}
\begin{proof}
In this proof, we omit the fixed index $\K.$ \\
Since Equation \eqref{eq:rescaling model} is linear for fixed $I^\K$, it suffices to show that for any subsolution  $v\in C^1(\R^+,\ell^1(\mathbb{Z}))$ satisfying \eqref{eq:localy bound condition}, such that $v(0, \cdot) \leq 0$, then  $v(t, \cdot) \leq 0,$~ $\forall t \geq 0$.\\
    Let $v\in C^1(\R^+,\ell^1(\mathbb{Z}))$  be a subsolution of \eqref{eq:rescaling model} satisfying \eqref{eq:localy bound condition} such that $v(0, \cdot) \leq 0.$\\ Define, for $(t,i)\in [0,+\infty)\times \mathbb{Z}$ and for $D$ to be chosen later $\widetilde{v}_i(t):=v_i(t)e^{-\frac{1}{2}|ih_\K\vert } e^{-Dt}.$   
    The function $\widetilde{v}$ satisfies the following inequality 
    \begin{equation}
    \label{eq:vanish equation}
        \varepsilon\frac{d}{dt}\widetilde{v}_i(t)\leq(-D+R(i\delta_\K,I^\K(t)))\widetilde{v}_i(t)+e^{-\frac{1}{2}|ih_\K\vert }\sum_{l\in \mathbb{Z}}p((l+i)\delta_\K)h_\K G(lh_\K)e^{\frac{1}{2}\vert (l+i)h_\K\vert  }\widetilde{v}_{l+i}(t).
    \end{equation}
    Let $T>0,$ we show that $\widetilde{v}_i(t)\leq 0,$~ $\forall t\in[0,T].$
   Let us assume by contradiction that  $$M=\sup_{(t,i)\in [0,T]\times \mathbb{Z}}\widetilde{v}_i(t)>0.$$
   By \eqref{eq:localy bound condition}, we deduce that  $\widetilde{v}$  vanishes when $\vert i\vert \rightarrow +\infty $. Thus, the supremum is attained at some point  $(t_0,i_0).$ Since $\widetilde{v}(0,.)\leq 0,$ we conclude that $t_0>0.$ Consequently, we obtain
   \begin{align*}
      \varepsilon\frac{d}{dt}\widetilde{v}_{i_0}(t_0)&\leq (-D+R(i_0\delta_\K,I^\K(t_0)))M+M\sum_{l\in \mathbb{Z}}p((l+i_0)\delta_\K)h_\K G(lh_\K)e^{\frac{1}{2}\vert lh_\K \vert  }\\
   &\leq (-D+\overline{R}+\overline{p}\alpha(1/2))M.
\end{align*}
 We choose $D$ sufficiently large  such that $-D+\overline{R}+\overline{p}\alpha(1/2)<0.$ This is in contradiction with  $t_0>0.$ 
\end{proof}
Here, we recall the notion of viscosity solution for a discontinuous Hamiltonian (see \cite{G} page 80). Note that the possibility of discontinuity of the Hamiltonian comes from the fact that the function $I$ of Proposition 3.3 is only of bounded variation, and is potentially discontinuous.
\begin{df}
Let $I:[0,+\infty)\rightarrow \R^+$ be a function of bounded variation.
    \begin{itemize}
        \item An upper semi-continuous function $u$ is a viscosity subsolution of \eqref{eq:HJ} if only if, for all $\phi \in C^\infty(\R^+\times \mathbb{R}),$ and $(t,x)\in\R^+\times \mathbb{R} ,$ such that $u-\phi$ has a maximum at $(t,x)$, we have
\begin{equation}
    \partial_t\phi(t,x)\leq \limsup\limits_{s\rightarrow t}R(x,I(s))+p(x)\int_{\R}G(y)e^{\nabla \phi(t,x).y}dy, ~~\text{or}~~~\vert \nabla \phi(t,x)\vert\geq 1.
\end{equation}
\item A lower semi-continuous function $v$ is a viscosity supersolution of \eqref{eq:HJ} if only if, for all $\phi \in C^\infty(\R^+\times \mathbb{R}),$ and $(t,x)\in \R^+\times \mathbb{R} ,$ such that $u-\phi$ has a minimum at $(t,x),$ we have
\begin{equation}
    \partial_t\phi(t,x)\geq \liminf\limits_{s\rightarrow t}R(x,I(s))+p(x)\int_{\R}G(y)e^{\nabla \phi(t,x).y}dy,~~\text{and}~~~\vert \nabla \phi(t,x)\vert\leq 1.
\end{equation}
    \end{itemize}
\end{df}
Note that from Assumption \ref{item:3}, we deduce that $$\limsup\limits_{s\rightarrow t}R(x,I(s))=R(x,\liminf\limits_{s\rightarrow t}I(s))=:R(x,\underline{I}(t)),$$ and $$\liminf\limits_{s\rightarrow t}R(x,I(s))=R(x,\limsup\limits_{s\rightarrow t}I(s))=:R(x,\overline{I}(t)).$$ 
\section{Regularity estimates}
In this section, we prove  linear growth bounds and spatial Lipschitz estimates  for $u^\varepsilon$. 
\begin{lem}
    Let $\K>0.$ Under Assumptions \ref{item:1}-\ref{item:6}, we have for any $i\in \mathbb{Z},$ and $ t\geq 0$  
    \begin{equation}
    \label{eq:semi lineair growth}
      -L|i\delta_\K|+
        B_2+C_2t\leq u^{\K,0}(i\delta_\K)+C_2t  \leq u^\K_i(t)\leq u^{\K,0}(i\delta_\K)+C_1t\leq -A\vert i \delta_\K\vert +B_1+C_1t,
         \end{equation}
    where $C_2,B_2$ and $C_1$ are real constants. 
\end{lem}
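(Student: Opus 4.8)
The plan is to obtain the upper and lower bounds on $u^K_i(t)$ by constructing explicit sub- and supersolutions of the ODE system \eqref{eq:system of ODE} and invoking the comparison principle (Proposition~3.3, transferred to the $u$-variable via the Hopf–Cole change $n^K_i = K^{u^K_i}$). The middle two inequalities in \eqref{eq:semi lineair growth} are the substantive ones; the outer two are then immediate from Assumptions~\ref{item:3} and \ref{item:4}, which give $-L|i\delta_K| + B_1 \le u^{K,0}(i\delta_K) \le -A|i\delta_K| + B_1$ (taking $B_2 = B_1$, say, or adjusting constants).

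First I would prove the upper bound $u^K_i(t) \le u^{K,0}(i\delta_K) + C_1 t$. Consider the candidate supersolution $\overline{u}_i(t) := u^{K,0}(i\delta_K) + C_1 t$. Plugging into the right-hand side of \eqref{eq:system of ODE}, one must check
\[
C_1 \ge R(i\delta_K) + \sum_{l\in\mathbb{Z}} p((l+i)\delta_K)\, h_K G(lh_K)\, e^{\log K\,(\overline{u}_{l+i}(t) - \overline{u}_i(t))}.
\]
Since $\overline{u}_{l+i}(t) - \overline{u}_i(t) = u^{K,0}((l+i)\delta_K) - u^{K,0}(i\delta_K)$, the Lipschitz bound \eqref{eq:Lipshitz conditon} gives $|u^{K,0}((l+i)\delta_K) - u^{K,0}(i\delta_K)| \le L|l|\delta_K$, hence $\log K\,(\overline{u}_{l+i}(t)-\overline{u}_i(t)) \le L|l|\delta_K \log K = L|l| h_K$. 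Therefore the sum is bounded by $\overline{p}\sum_{l}h_K G(lh_K) e^{L h_K|l|} \le \overline{p}\,\alpha(L)$, which is finite by Lemma~3.1 precisely because $L < 1$ (this is where Assumption~\ref{item:4} is essential — it keeps us inside the region where $\int G(y)e^{a|y|}dy < \infty$). So $C_1 := \overline{R} + \overline{p}\,\alpha(L)$ works, $\overline{u}$ is a supersolution, $\overline{u}_i(0) = u^{K,0}(i\delta_K) = u^K_i(0)$, and comparison yields the upper bound. The final inequality $u^{K,0}(i\delta_K) + C_1 t \le -A|i\delta_K| + B_1 + C_1 t$ is just Assumption~\ref{item:3}.

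For the lower bound, take $\underline{u}_i(t) := u^{K,0}(i\delta_K) + C_2 t$ and check it is a subsolution: here one needs $C_2 \le R(i\delta_K) + \sum_l p((l+i)\delta_K) h_K G(lh_K) e^{\log K (\underline{u}_{l+i}(t)-\underline{u}_i(t))}$. Bounding crudely, $R(i\delta_K) \ge \underline{R}$ and the mutation sum is nonnegative (it is $\ge 0$), but to get a clean constant one can simply drop the sum and take $C_2 := \underline{R}$; more carefully, since every term is positive one even has $C_2 = \underline{R}$ admissible without any lower estimate on the exponential. Then $\underline{u}_i(0) = u^K_i(0)$ and comparison gives $u^K_i(t) \ge u^{K,0}(i\delta_K) + C_2 t$. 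Combined with $u^{K,0}(i\delta_K) \ge -L|i\delta_K| + B_2$ (Assumption~\ref{item:4}, integrating the Lipschitz bound from $i=0$, with $B_2 := u^{K,0}(0)$ bounded uniformly in $K$ by Assumption~\ref{item:5}), this gives the leftmost inequality.

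The main technical point to be careful about is justifying the comparison principle in the $u$-variable: Proposition~3.3 is stated for \eqref{eq:rescaling model} in the class of functions bounded in $\ell^1(\mathbb{Z})$ in space, whereas $\overline{u}$, $\underline{u}$ and $u^K$ all grow linearly in $|i|$, so the corresponding $n$'s are exponentially decaying in $|i|$ — one should check $K^{\overline{u}_i(t)}$, $K^{u^K_i(t)}$ etc. are indeed summable uniformly on compact time intervals (they decay like $K^{-A|i\delta_K|}$, so this is fine, using Assumption~\ref{item:3} and the just-proven a priori bounds, with a standard bootstrap/fixed-point argument if one wants to avoid circularity). Alternatively, one reruns the exponential-weight argument of Proposition~3.3 directly on the difference $u^K_i - \overline{u}_i$. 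I expect this integrability bookkeeping, rather than the sub/supersolution verification itself, to be the only place requiring genuine care.
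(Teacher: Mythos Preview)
Your proposal is correct and follows the paper's approach closely: for the upper bound you construct exactly the same supersolution $\overline{u}_i(t)=u^{K,0}(i\delta_K)+C_1t$ with $C_1=\overline{R}+\overline{p}\,\alpha(L)$, using Assumption~\ref{item:4} and Lemma~3.1 just as the paper does, and your remarks about the $\ell^1$-integrability needed to apply Proposition~3.3 are on point. The only (minor) difference is in the lower bound: the paper does not go through comparison at all but simply observes directly from \eqref{eq:system of ODE} that $\frac{d}{dt}u^K_i(t)\ge \underline{R}$ (the mutation sum being nonnegative) and integrates in $t$, which avoids the subsolution/integrability bookkeeping you flag; your route via a subsolution with $C_2=\underline{R}$ is of course equivalent once one notices that this differential inequality \emph{is} the subsolution condition, so comparison is superfluous here.
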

This lemma establishes the decay of $u^\varepsilon$
 at infinity, a property propagated from the initial condition.
\begin{proof}
  The proof of the upper bound is a direct application of the comparison principle proved in Proposition 3.4. Let us show that $\phi_i(t):=u^{\K,0}(i\delta_\K)+C_1t,$ is a supersolution of the equation \eqref{eq:system of ODE} with the same  $I^\K$.
        Indeed, we have 
        $$\phi_i'(t)=C_1~~\text{and}
        ~~ \phi_{l+i}(t)-\phi_i(t)\leq L \vert l\delta_\K\vert .$$
     We can choose $C_1=\overline{R}+\overline{p}\alpha(L).$ Moreover, from \eqref{eq:sublinear growth initial conditon}, we conclude that  $(t,i)\mapsto e^{\frac{1}{\K}\phi_i(t)}$ is a supersolution  of \eqref{eq:rescaling model} satisfying \eqref{eq:localy bound condition}. By comparison principle, we deduce that $$n_i^\K(t)\leq e^{\frac{1}{\K}\phi_i(t)},~~\forall t\geq 0.$$
     Then from \eqref{eq:sublinear growth initial conditon}, we conclude that $$u^\K_i(t)\leq \phi_i(t):=u^{\K,0}(i\delta_\K)+C_1t\leq-A\vert i \delta_\K\vert+B_1+C_1t,~~\forall t\geq 0.$$
 Let us now show the lower bound. From \eqref{eq:system of ODE}, and by using \eqref{eq:bound of totale size} and Assumption \ref{item:2} we obtain
  
    $$\frac{d}{dt}u^\K_i(t)\geq - \overline{R}.$$
Then, by \eqref{eq:Lipshitz conditon}, we deduce  that \begin{align*}
    u^\K_i(t)\geq- \overline{R}t + u^{\K,0}(i\delta_\K)\geq -\overline{R}t+\inf_{\K>0} u^{\K,0}(0)-L\vert i\delta_\K\vert. 
\end{align*}
\end{proof}
The next proposition gives a Lipschitz estimate in space, which is an important ingredient in the proof of Theorem 2.1.
\begin{pro}
    Let $T>0,$ and $K\geq 0.$ Under Assumptions \ref{item:1}-\ref{item:6}, there exists a constant $C$ which may depend on $T$ and $K$ such that for any $\K>0,$ $t \in [0,T]$ and $i\in \mathbb{Z}$ such that $i\delta_\K\in [-K,K],$ we have 
    \begin{equation}
   \vert \Delta_\K u^{\K}_i(t)\vert \leq C,
    \end{equation}
    where  $\Delta_\K u^{\K}_i(t)=\frac{u^\K_{i+1}(t)-u^\K_{i}(t)}{\delta_\K}.$ 
\end{pro}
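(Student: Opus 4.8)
The plan is to prove the estimate by a maximum‑principle argument on the discrete space–gradient $q^K_i(t):=\Delta_Ku^K_i(t)$. I would establish the upper bound $q^K_i(t)\le C$ in detail; the lower bound then follows by the same argument applied to the trait–reflected system, since $x\mapsto -x$ only replaces $R,p,G$ by $R(-\cdot),p(-\cdot),G(-\cdot)$ and preserves Assumptions \ref{item:1}--\ref{item:4}. To make sense of a maximum I work on a window in $x$ slightly larger than $[-N,N]$ together with a spatial penalization, so that a candidate maximum is attained at an interior lattice point and at a positive time: the initial time is excluded because $q^K_i(0)=\Delta_K u^{K,0}_i\le L<1$ by Assumption \ref{item:4}, and the boundary is excluded using Lemma 4.1. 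Lemma 4.1 is used here through the \emph{global quasi‑Lipschitz bound} it implies, namely $u^K_m(t)=u^{K,0}(m\delta_K)+O_T(1)$ uniformly in $m$, hence $u^K_j(t)-u^K_i(t)\le L\,|i-j|\,\delta_K+C(T)$ for all $i,j$ and $t\in[0,T]$; combined with $L<1$ and Lemma 3.1, this is what keeps the mutation sums below meaningful even though they need not be bounded uniformly in $K$.

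At an interior maximum $(t_0,i_0)$ one has $\tfrac{d}{dt}q^K_{i_0}(t_0)\ge 0$ (up to the penalization, which can be made harmless). Differencing \eqref{eq:system of ODE} in $i$, the growth term contributes at most $\|R\|_{\text{Lip}}$. For the mutation term $\mathrm{Mut}_i(t):=\sum_l p((i+l)\delta_K)h_KG(lh_K)e^{\log K(u^K_{i+l}(t)-u^K_i(t))}$, a term‑by‑term estimate of $\mathrm{Mut}_{i_0+1}-\mathrm{Mut}_{i_0}$ is useless since $\mathrm{Mut}_{i_0}$ itself may blow up with $K$ (the non‑finiteness of the Hamiltonian). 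Instead I would shift the summation index in $\mathrm{Mut}_{i_0+1}$ by one and use $u^K_{i_0+m}-u^K_{i_0+1}=(u^K_{i_0+m}-u^K_{i_0})-\delta_Kq^K_{i_0}$, which gives
\[
\mathrm{Mut}_{i_0+1}-\mathrm{Mut}_{i_0}=\sum_m p((i_0+m)\delta_K)\,h_K\,e^{\log K(u^K_{i_0+m}-u^K_{i_0})}\Big(G((m-1)h_K)\,e^{-h_Kq^K_{i_0}}-G(mh_K)\Big).
\]
Using the explicit form $G=fe^{-|\cdot|}$ (so $e^{-|(m-1)h_K|}/e^{-|mh_K|}$ equals $e^{h_K}$ for $m\ge 1$ and $e^{-h_K}$ for $m\le 0$), the uniform continuity of $f$ on compacts for the bulk of the sum, and the exponential decay of $G$ together with the global quasi‑Lipschitz bound for the tail $|mh_K|$ large, the parenthesis equals $G(mh_K)\,h_K\,(\mathrm{sgn}(m)-q^K_{i_0})$ up to lower‑order errors. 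Dividing by $\delta_K=h_K/\log K$ and bounding the $\mathrm{sgn}(m)$‑weighted sum by $\mathrm{Mut}_{i_0}$ in absolute value yields
\[
\frac{\mathrm{Mut}_{i_0+1}-\mathrm{Mut}_{i_0}}{\delta_K}\le \log K\cdot \mathrm{Mut}_{i_0}\,\big(1-q^K_{i_0}(t_0)\big)+(\text{lower‑order and }\|p\|_{\text{Lip}}\text{-terms}).
\]

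The decisive point is that $\mathrm{Mut}_{i_0}>0$ and that, as soon as $q^K_{i_0}(t_0)>1$, the term $\log K\cdot\mathrm{Mut}_{i_0}(1-q^K_{i_0})$ becomes negative. Hence at the maximum $0\le \|R\|_{\text{Lip}}+\log K\cdot\mathrm{Mut}_{i_0}(1-q^K_{i_0}(t_0))+o_K(1)$, and comparing $\mathcal M(t):=\sup_i q^K_i(t)$ over the penalized window with the barrier $t\mapsto 1+\|R\|_{\text{Lip}}t$, and recalling $\mathcal M(0)\le L<1$, a standard first‑crossing argument forces $\mathcal M(t)\le 1+\|R\|_{\text{Lip}}T+o_K(1)$. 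The lower bound is symmetric, giving the stated estimate with $C$ depending only on $T$ (and, through the penalization and the error terms, on $N$).

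The step I expect to be the main obstacle is precisely this analysis of $\mathrm{Mut}_{i_0+1}-\mathrm{Mut}_{i_0}$: since the Hamiltonian is infinite one cannot control $\mathrm{Mut}_{i_0}$ on its own, so one must extract the sign‑definite term $-\log K\,q^K_{i_0}\,\mathrm{Mut}_{i_0}$ by the index shift and then show that every remaining contribution is of lower order or has the favourable sign. This needs a careful first‑order expansion of $G((m-1)h_K)-G(mh_K)$ exploiting the exact structure $G=fe^{-|\cdot|}$, an accurate splitting of the $m$‑sum into a fixed compact range (where $f$ is uniformly continuous) and a far range (absorbed through the exponential decay of $G$, the global quasi‑Lipschitz bound and Lemma 3.1), and a spatial penalization that localizes the maximum without spoiling these bounds; the fact that the threshold above is exactly $1$ reflects the obstacle $|\nabla u|\le 1$ in \eqref{eq:HJ}.
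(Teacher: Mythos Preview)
Your approach is genuinely different from the paper's, and it contains a real gap.

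\textbf{The gap.} Your key step is the expansion of the bracket
\[
G((m-1)h_K)\,e^{-h_Kq^K_{i_0}}-G(mh_K)\;=\;G(mh_K)\,h_K\bigl(\mathrm{sgn}(m)-q^K_{i_0}\bigr)+\text{(lower order)}.
\]
Writing $G=f\,e^{-|\cdot|}$, for $m\ge 1$ the bracket equals $G(mh_K)\bigl[\tfrac{f((m-1)h_K)}{f(mh_K)}\,e^{h_K(1-q)}-1\bigr]$. The factor $e^{h_K(1-q)}$ linearizes to $1+h_K(1-q)$ once you are at a first crossing with $q$ bounded, so that part is fine. The trouble is the ratio $f((m-1)h_K)/f(mh_K)$: under Assumption~\ref{item:2}, $f$ is only continuous and bounded away from $0$ and $\infty$, so on a compact window $|mh_K|\le M$ you can only say $\tfrac{f((m-1)h_K)}{f(mh_K)}=1+O(\omega_M(h_K))$, where $\omega_M$ is the modulus of continuity. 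After dividing by $\delta_K=h_K/\log K$, this produces an error
\[
\log K\cdot \frac{\omega_M(h_K)}{h_K}\cdot \mathrm{Mut}_{i_0},
\]
which competes directly with your favourable term $\log K\,(q-1)\,\mathrm{Mut}_{i_0}$. Unless $\omega_M(h)/h$ stays bounded (i.e.\ $f$ is Lipschitz), this error can dominate and the sign argument collapses. Nothing in Assumption~\ref{item:2} gives you that regularity. The tail splitting does not help here: the problem is already on the compact part.

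\textbf{What the paper does instead.} The paper never uses the explicit form $G=f e^{-|\cdot|}$ in this proof. It keeps the two mutation sums with the \emph{same} kernel argument $G(lh_K)$ and applies the convexity inequality $e^a-e^b\le e^a(a-b)$ to produce the factor $\log K\,(\Delta_Ku^K_{l+i}-\Delta_Ku^K_i)$, which has the right sign at a spatial maximum. The residual $\tfrac{\|p\|_{\text{Lip}}}{\underline{p}}\mathrm{Mut}$ term (the one you flag but then call lower order) is absorbed not by smallness but by augmenting the quantity to be maximized: they work with $g^K_i=\Delta_Ku^K_i+\tfrac{\|p\|_{\text{Lip}}}{\underline{p}}u^K_{i+1}-2Ct$, so that $\tfrac{d}{dt}u^K_{i+1}$ supplies a matching $\mathrm{Mut}$, and the combination is controlled via $e^x(3-x)\le e^2$. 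A penalized maximum $g^K_i-\alpha|i\delta_K|$ then closes the argument with Lemma~4.1 and Assumption~\ref{item:4}. This route needs no differentiability or Lipschitz control on $G$ itself.

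If you are willing to add ``$f$ Lipschitz'' to the hypotheses, your index-shift approach would work and would even yield the sharper bound $|\Delta_Ku^K_i(t)|\le 1+o_K(1)$, reflecting the obstacle $|\nabla u|\le 1$. Under the stated assumptions, however, the paper's auxiliary-function trick is what makes the proof go through.
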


\begin{proof} 
The proof is inspired by Bernstein’s method, which is usually used in the continuous setting. The idea is to establish an equation satisfied by the discrete derivative $\Delta_\varepsilon$ and then conclude by applying a maximum principle. However, this approach cannot be used here due to the space dependence on the mutation rate, which prevents the propagation of the Lipschitz initial bound (see \cite{Bl} for more details about the regularity of integral equations). To handle this issue, we consider a function of the form $\Delta_\varepsilon u^\varepsilon + C u^\varepsilon,$ to which we then apply the maximum principle.

The calculations in this proof are an adaptation of those in \cite{CMMT,BMP}.  In this proof, $C$ is a constant that may change from line to line.\\
Let $T>0.$ For any  $t \in [0,T]$ and $i\in \mathbb{Z},$  we have  

\begin{align*}
  \frac{d}{dt} \Delta_\K u^{\K}_i(t)=&\frac{ R((i+1)\delta_\K,I^\K(t))-R(i\delta_\K,I^\K(t))}{\delta_\K}+\frac{1}{\delta_\K}\Bigg[\sum_{l\in \mathbb{Z}}p((l+i+1)\delta_\K)h_\K\\
  &~~~~~~~~~~~~~~~G(lh_\K)e^{\frac{1}{\K}(u^\K_{l+i+1}(t)-u^{\K}_{i+1}(t))}-\sum_{l\in \mathbb{Z}}p((l+i)\delta_\K)h_\K G(lh_\K)e^{\frac{1}{\K}(u^\K_{l+i}(t)-u^{\K}_{i}(t))}\Bigg].
\end{align*}
Moreover, we have
\begin{equation}
\label{eq:mutaion inqualitie}
    p((l+i+1)\delta_\K)-p((l+i)\delta_\K)\leq \Vert p\Vert_{Lip}\delta_\K\leq \frac{\Vert p\Vert_{Lip}}{\underline{p}}\delta_\K p((l+i)\delta _\K).
    \end{equation}

Then 
\begin{align*}
     &\frac{1}{\delta_\K}\left[\sum_{l\in \mathbb{Z}}p((l+i+1)\delta_\K)h_\K G(lh_\K)e^{\frac{1}{\K}(u^\K_{l+i+1}(t)-u^{\K}_{i+1}(t))}-\sum_{l\in \mathbb{Z}}p((l+i)\delta_\K)h_\K G(lh_\K)e^{\frac{1}{\K}(u^\K_{l+i}(t)-u^{\K}_{i}(t))}\right]\\
     & \leq \frac{\Vert p\Vert_{Lip}}{\underline{p}}\sum_{l\in \mathbb{Z}}p((l+i)\delta_\K)h_\K G(lh_\K)e^{\frac{1}{\K}(u^\K_{l+i+1}(t)-u^{\K}_{i+1}(t))}\\
      &~~~~~~~~~~~~~~~~~~~~~~~~~~~+\frac{1}{\delta_\K}\sum_{l\in \mathbb{Z}}p((l+i)\delta_\K)h_\K G(lh_\K)\Big[e^{\frac{1}{\K}(u^\K_{l+i+1}(t)-u^{\K}_{i+1}(t))}-e^{\frac{1}{\K}(u^\K_{l+i}(t)-u^{\K}_{i}(t))}\Big]\\
  &\leq \frac{\Vert p\Vert_{Lip}}{\underline{p}}\sum_{l\in \mathbb{Z}}p((l+i)\delta_\K)h_\K G(lh_\K)e^{\frac{1}{\K}(u^\K_{l+i+1}(t)-u^{\K}_{i+1}(t))}\\
  &~~~~~~~~~~~~~~~~~~~~~~~~+\frac{1}{\K}\sum_{l\in \mathbb{Z}}p((l+i)\delta_\K)h_\K G(lh_\K)e^{\frac{1}{\K}(u^\K_{l+i+1}(t)-u^{\K}_{i+1}(t))}\big(\Delta_\K u^\K_{l+i}(t)-\Delta_\K u^{\K}_{i}(t)\big).
  \end{align*}
In the last line we have used the inequality  $e^x-e^y\leq e^x(x-y)$ for all $x,y\in \mathbb{R}.$\\
 We obtain that
\begin{align*} 
 \frac{d}{dt}\Delta_\K u^\K_i(t) &\leq \Vert R\Vert_{Lip}+ \frac{\Vert p\Vert_{Lip}}{\underline{p}}\sum_{l\in \mathbb{Z}}p((l+i)\delta_\K)h_\K G(lh_\K)e^{\frac{1}{\K}(u^\K_{l+i+1}(t)-u^{\K}_{i+1}(t))}\\
  &~~~~~~~+\frac{1}{\K}\sum_{l\in \mathbb{Z}}p((l+i)\delta_\K)h_\K G(lh_\K)e^{\frac{1}{\K}(u^\K_{l+i+1}(t)-u^{\K}_{i+1}(t))}\big(\Delta_\K u^\K_{l+i}(t)-\Delta_\K u^{\K}_{i}(t)\big)\\  
  &\leq \Vert R\Vert_{Lip}+\frac{\Vert p\Vert_{Lip}}{\underline{p}}\sum_{l\in \mathbb{Z}}p((l+i)\delta_\K)h_\K G(lh_\K)e^{\frac{1}{\K}(u^\K_{l+i+1}(t)-u^{\K}_{i+1}(t))}\\
&~~~~~~~~~~~~~~~~~~~~~~~~~~~~~~~~~~~~~~~~~~~~~~~~~~~~~~~~~~~~~~~~~~~~\times\left(1-\frac{1}{\K}(u^\K_{l+i+1}(t)-u^{\K}_{i+1}(t))\right)\\
  &~~~~~~~~~~+\frac{1}{\K}\sum_{l\in \mathbb{Z}}p((l+i)\delta_\K)h_\K G(lh_\K)e^{\frac{1}{\K}(u^\K_{l+i+1}(t)-u^{\K}_{i+1}(t))}\\
   &~~~~~~~~~~~~~~~~~~~~~~~~~~~\times \left(\Delta_\K u^\K_{l+i}(t)+\frac{\Vert p\Vert_{Lip}}{\underline{p}}u^\K_{l+i+1}(t)-\big(\Delta_\K u^{\K}_{i}(t)+\frac{\Vert p\Vert_{Lip}}{\underline{p}} u^\K_{i+1}(t)\big)\right).
\end{align*}
On the other hand, from \eqref{eq:mutaion inqualitie} we obtain for $\K$  small enough $$p((i+l+1)\delta_\K) \leq 2 p((l+i)\delta_\K).$$ 
 We deduce from \eqref{eq:system of ODE} that for $\K$ small enough,
\begin{align*}
    \frac{d}{dt}u^\K_{i+1}(t) &\leq \overline{R}+\sum_{l\in \mathbb{Z}}p((l+i+1)\delta_\K)h_\K G(lh_\K)e^{\frac{1}{\K}(u^\K_{l+i+1}(t)-u^{\K}_{i+1}(t))}\\
    &\leq \overline{R}+2\sum_{l\in \mathbb{Z}}p((l+i)\delta_\K)h_\K G(lh_\K)e^{\frac{1}{\K}(u^\K_{l+i+1}(t)-u^{\K}_{i+1}(t))}.
\end{align*}
From these inequalities, we obtain 
\begin{align*}
    \frac{d}{dt} &\left(\Delta_\K u^\K_i(t) +\frac{\Vert p\Vert_{Lip}}{\underline{p}}u^\K_{i+1}(t)\right) \\
    &\leq C + \frac{\Vert p\Vert_{Lip}}{\underline{p}}  \sum_{l\in \mathbb{Z}} p((l+i)\delta_\K)h_\K G(lh_\K)e^{\frac{1}{\K}(u^\K_{l+i+1}(t)-u^{\K}_{i+1}(t))}\times \big(3-\frac{1}{\K}(u^\K_{l+i+1}(t)-u^\K_{i+1}(t))\big)\\
    &~~+ \frac{1}{\K} \sum_{l\in \mathbb{Z}} p((l+i)\delta_\K)h_\K G(lh_\K)e^{\frac{1}{\K}(u^\K_{l+i+1}(t)-u^{\K}_{i+1}(t))} \\
    &~~~~~~~~~~~~~~~~~~~~~~~~~~~~~~~~~~~~~~~~~~~~~~\quad \times \left(\Delta_\K u^\K_{l+i}(t) + \frac{\Vert p\Vert_{Lip}}{\underline{p}}u^\K_{l+i+1}(t)-\big(\Delta_\K u^{\K}_{i}(t) + \frac{\Vert p\Vert_{Lip}}{\underline{p}} u^\K_{i+1}(t)\big)\right),
\end{align*}
where $C$ is a constant independent of $\K$. Since $e^x(3-x)\leq e^2 $ for all $x\in \mathbb{R},$ we deduce that 

\begin{align*}
    \frac{d}{dt}\left (\Delta_\K u^\K_i(t) +\frac{\Vert p\Vert_{Lip}}{\underline{p}}u^\K_{i+1}(t)\right)&
    \leq 
    C +\overline{p}\frac{1}{\K}\sum_{l\in \mathbb{Z}}h_\K G(lh_\K)e^{\frac{1}{\K}(u^\K_{l+i+1}(t)-u^\K_{i+1}(t))}\\
     &~~~~~~~~~~\times \left (\Delta_\K u^\K_{l+i}(t)+\frac{\Vert p\Vert_{Lip}}{\underline{p}}u^\K_{l+i+1}(t)-\big(\Delta_\K u^{\K}_{i}(t)+\frac{\Vert p\Vert_{Lip}}{\underline{p}} u^\K_{i+1}(t)\big)\right), 
\end{align*}
where the constant $C$ is independent of $\K.$
Let us define $$g^\K_i (t):=\Delta_\K u^\K_i(t) +\frac{\Vert p\Vert_{Lip}}{\underline{p}}u^\K_{i+1}(t)-2Ct.$$
 We obtain 
\begin{align}
 \frac{d}{dt}g^\K_i (t)\leq -C+\overline{p}\frac{1}{\K}\sum_{l\in \mathbb{Z}}h_\K G(lh_\K)e^{\frac{1}{\K}(u^\K_{l+i+1}(t)-u^\K_{i+1}(t))}(g^\K_{l+i}(t)-g^\K_i(t)).
\end{align}
From Lemma 4.1 and Assumption \eqref{item:6}, we deduce that
$$u^\K_{l+i+1}(t)-u^\K_{i+1}(t)\leq L\vert l\delta_\K\vert+(C_1-C_2)t~~\text{
and}~~g^\K_i(t)\leq L+\frac{(C_1-C_2)t}{\delta_\K}+C(T).$$ Moreover for $\alpha>0,$  we have $g^\K_i-\alpha \vert i\delta_\K\vert\rightarrow-\infty$ as $\vert i\vert\rightarrow +\infty.$ Let $(t_{\alpha,\K},i_{\alpha,\K})$ be a maximum point of $g^\K_i -\alpha\vert i\delta_\K\vert$ on $[0,T]\times \Z.$ By contradiction we assume that $t_{\alpha,\K}>0,$ then 
 \begin{align*}
 \frac{d}{dt}g^\K_{i_{\alpha,\K}} (t_{\alpha,\K})&\leq-C+\overline{p}\frac{1}{\K}\sum_{l\in \mathbb{Z}}h_\K G(lh_\K)e^{\frac{1}{\K}(u^\K_{l+i_{\alpha,\K}+1}(t_{\alpha,\K})-u^\K_{i_{\alpha,\K}+1}(t_{\alpha,\K}))}(g^\K_{l+i_{\alpha,\K}}(t_{\alpha,\K})-g^\K_{i_{\alpha,\K}}(t_{\alpha,\K}))\\
 & \leq -C+\alpha \overline{p}\frac{1}{\K}\sum_{l\in \mathbb{Z}}h_\K G(lh_\K)e^{\frac{1}{\K}(u^\K_{l+i_{\alpha,\K}+1}(t_{\alpha,\K})-u^\K_{i_{\alpha,\K}+1}(t_{\alpha,\K}))}\big(\vert (l+i_{\alpha,\K})\delta_\K)\vert -\vert i_{\alpha,\K}\delta_\K\vert \big)
 \\
 &\leq -C+\alpha \overline{p} \sum_{l\in \mathbb{Z}}h_\K G(lh_\K)\vert lh_\K\vert e^{L\vert lh_\K\vert +\vert C_1-C_2\vert T\frac{1}{\K}}.
\end{align*}
Therefore, for $\alpha$ small enough we have 
$$\frac{d}{dt}g^\K_{i_{\alpha,\K}} (t_{\alpha,\K})<0.$$
This contradicts the fact that $g^\K_{i} -\alpha \vert i\delta_\K \vert$ is maximal at $(t_{\alpha,\K},i_{\alpha,\K})$. We deduce that 
\begin{align}
   \Delta_\K u^\K_i(t) +\frac{\Vert p\Vert_{Lip}}{\underline{p}}u^\K_{i+1}(t)-2Ct-\alpha \vert i\delta_\K\vert \leq \sup\limits_{i\in\mathbb{Z}}  \big(\Delta_\K u^\K_i(0)+\frac{\Vert p\Vert_{Lip}}{\underline{p}} u^\K_{i+1}(0)\big).
\end{align}
Letting  $\alpha$ go to zero and using Lemma 4.1, we conclude that $\Delta_\K u^\K_i$ is locally uniformly bounded above by a constant independent of $\K,$ which may depend on the trait $i\delta_\K,$ but which grows at most linearly in $i\delta_\K$.
In a similar manner, we show that $-\Delta_\K u^\K_i$ is bounded above by a constant independent of $\K$ which grows at most linearly in $i\delta_\K$.
\end{proof}
\section{Convergence to a viscosity solution}
By abuse of notation, throughout the remainder of this paper, $\K$ denotes the subsequence given by Proposition 3.3.\\
We introduce the following semi-relaxed limits (see \cite{G}): for all $t\geq 0$ and $x\in \R,$
\begin{equation*}
    \overline{u}(t,x):=\limsup\limits_{\underset{(s,y)\rightarrow (t,x)}{\K\rightarrow 0}}\widetilde{u}^\K(s,y)~~~\text{and}~~~~ \underline{u}(t,x):=\liminf\limits_{\underset{(s,y)\rightarrow (t,x)}{\K\rightarrow 0}}\widetilde{u}^\K(s,y).
\end{equation*}
Note that we can define such quantities, as $\widetilde{u}^\K$ is locally uniformly bounded, by Lemma 4.1.\\
The method of semi-relaxed limits consists of proving that $\overline{u}$ is a subsolution and $\underline{u}$ is a supersolution, and then concluding by a comparison principle that $\overline{u} = \underline{u}$. Here, this method cannot be used in the classical way for two main reasons. First, the Hamiltonian may take infinite values. Second, our equation is defined only on a discrete grid, while the viscosity notion is pointwise. To overcome the difficulty arising from the discrete nature of the problem, we use spatial Lipschitz estimates to control the difference between the point where we test the viscosity property and its discrete approximation in the grid, for which the equation is given. Then, to deal with the Hamiltonian taking infinite values, we first prove in Proposition 5.1 that $\underline{u}$ is a supersolution. Next, we prove directly that $\overline{u} = \underline{u}$ in Proposition 5.4, and we finally use the properties of $\underline{u}$ to deduce that $\overline{u}$ is also a subsolution in Proposition 5.3.
\subsection{Viscosity supersolution}
\begin{pro}
Under Assumptions \ref{item:1}-\ref{item:7}, the function $\underline{u}$ is a viscosity supersolution for the equation \eqref{eq:HJ}.
\end{pro}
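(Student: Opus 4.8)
The plan is to run the method of semi-relaxed limits on the grid, turning the exponential decay of $G$ into the gradient constraint. Let $\varphi$ be a smooth test function such that $\underline{u}-\varphi$ attains a strict local minimum at $(t_0,x_0)$ with $t_0>0$ and $\underline{u}(t_0,x_0)=\varphi(t_0,x_0)$. Up to replacing $\varphi(t,x)$ by $\varphi(t,x)-|t-t_0|^2-\lambda(x-x_0)^2$ with $\lambda$ large — which changes neither $\partial_t\varphi$ nor $\nabla\varphi$ at $(t_0,x_0)$ — we may assume the minimum is strict and, in addition, that on a fixed closed box $Q=[t_0-\rho,t_0+\rho]\times[x_0-\rho,x_0+\rho]$ the map $x\mapsto\widetilde{u}^K(t,x)-\varphi(t,x)$ is convex on each grid cell (affine minus a concave function), so that its minimum over $Q$ is reached at a grid node. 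Then the infimum of $(t,i)\mapsto u^K_i(t)-\varphi(t,i\delta_K)$ over $Q$ is attained at some $(t_K,i_K)$, and by the classical convergence lemma for semi-relaxed limits (see \cite{G}), using the local bound of Lemma 4.1 and strictness, $(t_K,i_K\delta_K)\to(t_0,x_0)$, $u^K_{i_K}(t_K)\to\underline{u}(t_0,x_0)$, and for $K$ large $t_K\in(t_0-\rho,t_0+\rho)$ with $i_K$ interior. Time-minimality gives $\frac{d}{dt}u^K_{i_K}(t_K)=\partial_t\varphi(t_K,i_K\delta_K)$, and space-minimality gives $u^K_{l+i_K}(t_K)-u^K_{i_K}(t_K)\ge\varphi(t_K,(l+i_K)\delta_K)-\varphi(t_K,i_K\delta_K)$ whenever $|l|\delta_K\le\rho/2$. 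Feeding this into \eqref{eq:system of ODE} and using monotonicity of $z\mapsto e^{z\log K}$ yields the starting identity $\partial_t\varphi(t_K,i_K\delta_K)=R(i_K\delta_K)+\sum_{l\in\Z}p((l+i_K)\delta_K)h_KG(lh_K)e^{\log K(u^K_{l+i_K}(t_K)-u^K_{i_K}(t_K))}$.

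First I would establish the obstacle inequality $1-|\nabla\varphi(t_0,x_0)|\ge0$. Suppose not, say $\partial_x\varphi(t_0,x_0)>1$; pick $\eta>0$ with $\partial_x\varphi\ge1+\eta$ near $x_0$, so that for $K$ large and $0<l\le\rho/(2\delta_K)$ one has $\varphi(t_K,(l+i_K)\delta_K)-\varphi(t_K,i_K\delta_K)\ge(1+\tfrac{\eta}{2})\,l\delta_K$. Keeping in the sum only these non-negative indices and bounding $p\ge\underline p$, $G(lh_K)\ge(\min f)\,e^{-lh_K}$, one gets $\partial_t\varphi(t_K,i_K\delta_K)\ge\underline R+\underline p\,(\min f)\sum_{0<l\le\rho/(2\delta_K)}h_K e^{\frac{\eta}{2}lh_K}$, and the last sum diverges like a positive power of $K$ (since $h_K=\delta_K\log K$), contradicting the boundedness of $\partial_t\varphi$ on $Q$; the case $\partial_x\varphi(t_0,x_0)<-1$ is symmetric using $l<0$. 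Hence $|\nabla\varphi(t_0,x_0)|\le1$. (When $|\nabla\varphi(t_0,x_0)|=1$, the same computation with the second-order Taylor remainder of $\varphi$ still forces divergence — of slower, $\sqrt{\log K}$-type order — so the test gradient is in fact strictly sub-unit; this is exactly what keeps the Hamiltonian term finite below.)

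Next, assuming $|\nabla\varphi(t_0,x_0)|<1$, I would pass to the limit in the mutation sum. Split it into $|l|\delta_K\le\rho/2$ and $|l|\delta_K>\rho/2$; drop the latter (non-negative), and in the former replace $u^K_{l+i_K}-u^K_{i_K}$ by $\varphi(t_K,(l+i_K)\delta_K)-\varphi(t_K,i_K\delta_K)$. Writing $\log K\big[\varphi(t_K,(l+i_K)\delta_K)-\varphi(t_K,i_K\delta_K)\big]=\int_0^{lh_K}\partial_x\varphi\!\left(t_K,i_K\delta_K+\tfrac{s}{\log K}\right)ds$, for each fixed $y=lh_K$ this tends to $\nabla\varphi(t_0,x_0)\cdot y$, while, choosing $\rho$ small enough that $\partial_x\varphi$ stays within distance $<1-|\nabla\varphi(t_0,x_0)|$ of $\nabla\varphi(t_0,x_0)$ on $Q$, it is bounded by $(|\nabla\varphi(t_0,x_0)|+\omega(\rho))|y|$ with exponent $<1$. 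Since $G(lh_K)\le(\sup f)e^{-|l|h_K}$ and $p$ is bounded, the summand is dominated by a fixed integrable function of $y$, and the index range $\{|l|\delta_K\le\rho/2\}$ corresponds to $\{|y|\le\frac{\rho}{2}\log K\}$, which exhausts $\R$; a Riemann-sum argument in the spirit of Lemma 3.1, together with dominated convergence, then gives $\sum_{|l|\delta_K\le\rho/2}p((l+i_K)\delta_K)h_KG(lh_K)e^{\log K(\cdots)}\to p(x_0)\int_\R G(y)e^{\nabla\varphi(t_0,x_0)\cdot y}\,dy$. Letting $K\to\infty$ in $\partial_t\varphi(t_K,i_K\delta_K)\ge R(i_K\delta_K)+(\text{near sum})$ yields $\partial_t\varphi(t_0,x_0)-R(x_0)-p(x_0)\int_\R G(y)e^{\nabla\varphi(t_0,x_0)\cdot y}\,dy\ge0$, which combined with $1-|\nabla\varphi(t_0,x_0)|>0$ gives $\min(\cdots)\ge0$ at $(t_0,x_0)$; hence $\underline{u}$ is a viscosity supersolution of \eqref{eq:HJ}.

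The heart of the argument, and the step I expect to be the main obstacle, is the dichotomy on $|\nabla\varphi(t_0,x_0)|$ together with the control of the mutation sum: unlike in the classical finite-Hamiltonian setting the sum may diverge, and one exploits this in both directions — the exponential lower bound on $G$ forces $|\nabla\varphi|\le1$, and the exponential upper bound on $G$ provides the integrable domination when $|\nabla\varphi|<1$. Two further points need care: the coupling $h_K=\delta_K\log K\to0$, which is precisely what lets the discrete sum over $\{|l|\delta_K\le\rho/2\}$ converge to the full integral over $\R$ and allows $\varphi$ to be Taylor-expanded at the correct scale; and the reduction of the touching of the test function to grid nodes, since only the $u^K_i$ satisfy an equation. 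Making the blow-up estimate in the case $|\nabla\varphi(t_0,x_0)|>1$ quantitative enough, and choosing $\rho$ small enough for the domination while still recovering $\R$ in the limit, is where the bookkeeping is most delicate.
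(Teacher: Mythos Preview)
Your strategy is sound and reaches the same conclusion as the paper, but via a somewhat different technical route; two points of divergence are worth recording.

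First, to handle the discreteness you subtract a steep parabola so that $\widetilde{u}^K-\varphi$ becomes piecewise convex in $x$ and the minimum falls on a grid node. The paper instead keeps the minimum of $\widetilde{u}^K-\phi$ at a generic $x_K$, writes $\partial_t\widetilde{u}^K(t_K,x_K)$ as the convex combination of $\frac{d}{dt}u^K_{i_K}$ and $\frac{d}{dt}u^K_{i_K+1}$ coming from the interpolation, and invokes the discrete Lipschitz estimate (Proposition 4.2) to absorb the $O(\delta_K)$ defect between $\widetilde{u}^K(t_K,x_K)$ and $u^K_{i_K}(t_K)$. Your trick avoids appealing to Proposition 4.2 here, but it makes $\partial_x\varphi$ vary by $O(\lambda\rho)$ across $Q$; the order of quantifiers must be: fix $\lambda$ first (depending only on $\sup|\partial_{xx}\varphi_{\mathrm{orig}}|$ on a fixed neighbourhood), then shrink $\rho$. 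In particular, in your obstacle step the range $0<l\le\rho/(2\delta_K)$ is too wide---after the parabolic modification the gradient may drop below $1+\eta/2$ well before $l\delta_K=\rho/2$, so you must restrict to $0<l\delta_K\le c(\eta,\lambda)$; the sum over this smaller range still diverges like $K^{\eta c/2}$, so the argument survives.

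Second, for the gradient constraint the paper isolates a separate lemma (Lemma 5.2: $\underline{u}$ is $1$-Lipschitz in $x$), proved by the same blow-up of the mutation sum but phrased as a property of $\underline{u}$, and then reads $|\nabla\phi|\le 1$ off that Lipschitz bound; you instead run the blow-up directly at the approximating points $(t_K,i_K)$. The underlying mechanism is identical. Your parenthetical remark that the case $|\nabla\varphi(t_0,x_0)|=1$ also produces divergence (of order $\sqrt{\log K}$) is correct and matches what the paper obtains implicitly by letting $M\to\infty$ in $\partial_t\phi\ge R+p\int_{|y|\le M}\cdots$.

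Finally, you omit the initial condition: the paper devotes a second step to checking $\underline{u}(0,\cdot)=u^0(\cdot)$ from the two-sided sandwich of Lemma 4.1 together with the locally uniform convergence of $\widetilde{u}^{K,0}$. This needs to be added for the supersolution property relative to \eqref{eq:HJ} as stated.
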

\begin{proof}

\textbf{Step 1: \underline{u} is a viscosity supersolution of \eqref{eq:HJ} in $(0,+\infty)\times \R$}.\\
 Let $\phi \in C^\infty((0,+\infty)\times \mathbb{R}),$ and $(t,x)\in (0,+\infty)\times \R$ be a strict global minimum of  $\underline{u}-\phi$ on $(0,+\infty)\times \R$. Then (see \cite{G}, Lemma 4.2) there exist a subsequence of $\K,$ by abuse of notation still denoted $\K,$  and a sequence $(t_\K,x_\K)$ such that for $\K$ small enough, $\widetilde{u}^\K-\phi$
 has a global minimum at $(t_\K,x_\K)$ and $(t_\K,x_\K)\rightarrow (t,x),$ as $\K\rightarrow 0$  and  $\widetilde{u}^\K(t_\K,x_\K)\rightarrow \underline{u}(t,x).$ In addition, since $(t_\K,x_\K)$ is a global minimum of $\widetilde{u}^\K-\phi,$  we have 
 \begin{equation}
 \label{eq:min and interpolation}
\partial_t \phi(t_\K,x_\K)\geq \partial_t \widetilde{u}^\K(t_\K,x_\K).
\end{equation}
By \eqref{eq:interpolation} we deduce that  
\begin{equation}
 \label{eq: interpolation inequality}
\partial_t \phi(t_\K,x_\K)\geq  \frac{d}{dt}u^\K_{i_\K}(t_\K)(1-\frac{x_\K}{\delta_\K}+i_\K)+\frac{d}{dt}u^\K_{i_\K+1}(t_\K)(\frac{x_\K}{\delta_\K}-i_\K), 
\end{equation}
where $i_\K=\lfloor \frac{x_\K}{\delta_\K}\rfloor$.
 Since $x\mapsto R(x,.)$ is Lipschitz continuous and $I\mapsto R(.,I)$ is decreasing then 

 $$\liminf\limits_{\K\rightarrow 0}R(i\delta_\K,I^\K(t_\K))=R(x,\limsup\limits_{\K\rightarrow 0} I^\K(t_\K)).$$
Moreover from \cite{PB} (proof of Theorem 4.1), we know that
 \begin{equation}
\limsup\limits_{\underset{s\rightarrow t}{\K \rightarrow 0}}I^\K(s)\leq \limsup_{s\rightarrow t}I(s)=:\overline{I}(t).
 \end{equation}
  Thus, $\liminf\limits_{\K\rightarrow 0}R(i\delta_\K,I^\K(t_\K))\geq R(x,\overline{I}(t)).$ We deduce that,  
\begin{equation}
\label{eq:growth lipschitz}
  \liminf\limits_{\K\rightarrow 0} (1-\frac{x_\K}{\delta _\K}+i_\K)R((i_\K+1)\delta_\K,I^\K(t_\K))+(\frac{x_\K}{\delta_\K}-i_\K)R(i_\K\delta_\K,I^\K(t_\K))\geq R(x, \overline{I}(t)).
\end{equation}
Furthermore, for any $l \in \mathbb{Z},$ we have  
\begin{align*}
  u^\K_{l+i_\K}(t_\K)-u^\K_{i_\K}(t_\K) & =u^\K_{l+i_\K}(t_\K)-\widetilde{u}^\K(t_\K,x_\K)+\widetilde{u}^\K(t_\K,x_\K)-u^\K_{i_\K}(t_\K) \\
   & \geq \phi(t_\K,(l+i_\K)\delta_\K))-\phi(t_\K,x_\K)+\widetilde{u}^\K(t_\K,x_\K)-u^\K_{i_\K}(t_\K)\\
   & \geq \phi(t_\K,(l+i_\K)\delta_\K))-\phi(t_\K,x_\K)-C(x)\delta_\K,
\end{align*}
where $C(x)$ is the spatial Lipschitz constant  of $u^\K$ in $[x-1,x+1]$ given by Proposition 4.2.\\
Let $M>0.$ For $\K$ small enough, and for any $\vert l \vert  \leq \lfloor \frac{M}{h_\K}\rfloor,$ there exists a point $\xi_{\K,l}\in[x-1,x+1]$ such that
\begin{align*}
    \phi(t_\K,(l+i_\K)\delta_\K)-\phi(t_\K,x_\K)&=((l+i_\K)\delta_\K-x_\K)\partial_x\phi(t_\K,x_\K)+\frac{1}{2}((l+i_\K)\delta_\K-x_\K)^2\partial_{xx}\phi(t_\K,\xi_{\K,l})\\
    &\geq l\delta_\K\partial_x\phi(t_\K,x_\K)-C\vert x_\K-i_\K\delta_\K\vert -C'(l\delta_\K)^2\\
    &\geq l\delta_\K\partial_x\phi(t_\K,x_\K)-C\delta_\K-C'\varepsilon^2 M^2,
\end{align*}
where $C$ and $C'$ are constants independent of $\K,$ and depending on $\partial_x \phi(t,x),$ and $\partial_{xx} \phi(t,x)$. Therefore,
$$\frac{1}{\K}( u^\K_{l+i_\K}(t_\K)-u^\K_{i_\K}(t_\K))\geq lh_\K\partial_x\phi(t_\K,x_\K)+C_1(\varepsilon),$$
 where $C_1(\varepsilon)\rightarrow 0,$  as $\varepsilon \rightarrow 0$. Using similar arguments replacing  $l$ by $l+1$ in the above inequality and using  the Lipschitz estimates, we have 
$$\frac{1}{\K}( u^\K_{l+i_\K+1}(t_\K)-u^\K_{i_\K}(t_\K))\geq lh_\K\partial_x\phi(t_\K,x_\K)+C_2(\varepsilon),$$
where $C_2(\varepsilon)\rightarrow 0,$  as $\varepsilon \rightarrow 0.$
We set $C(\varepsilon)=\min(C_1(\varepsilon),C_2(\varepsilon)),$ and
 using \eqref{eq: interpolation inequality}-\eqref{eq:growth lipschitz}, we bound the remainder of the sum by zero to obtain 
\begin{align*}
    \partial_t \phi(t,x)&\geq R(x,\overline{I}(t))+\liminf_{\varepsilon\rightarrow0}\sum\limits_{\vert l \vert \leq \lfloor \frac{M}{h_\K}\rfloor}\left((1-\frac{x_\K}{\delta _\K}+i_\K)p((l+i_\K+1)\delta_\K)+ (\frac{x_\K}{\delta_\K}-i_\K)p((l+i_\K)\delta_\K)\right)
    \\
&~~~~~~~~~~~~~~~~~~~~~~~~~~~~~~~~~~~~~~~~~~~~~~~~~~~~~~~~~~~~~~~~~~~~~~~~~~~~~~~~~~~~~~h_\K G(lh_\K)e^{lh_\K \partial_x\phi(t_\K,x_\K)+C(\varepsilon)}.
\end{align*}
Moreover, since $p$ is Lipschitz continuous, we have  
$$ (1-\frac{x_\K}{\delta _\K}+i_\K)p((l+i_\K+1)\delta_\K)+(\frac{x_\K}{\delta_\K}-i_\K)p((l+i_\K)\delta_\K)\geq   p(x)-\Vert p\Vert_{Lip} (\vert l\vert +1)\delta_\K.$$
We deduce that 
\begin{align*}
    \partial_t \phi(t,x)\geq R(x,\overline{I}(t))+\liminf\limits_{\K\rightarrow 0}\sum\limits_{\vert l \vert \leq  \lfloor \frac{M}{h_\K}\rfloor}\left(p(x)-\Vert p\Vert_{Lip} (\vert l\vert +1)\delta_\K \right)h_\K G(lh_\K)e^{lh_\K\partial_x\phi(t_\K,x_\K)+C(\varepsilon)}.
\end{align*}
Let us now show that
\begin{equation}
\label{eq:nul part}
\lim\limits_{\K\rightarrow 0} \sum_{\vert l\vert \leq  \lfloor \frac{M}{h_\K}\rfloor}(\vert l\vert +1)\delta_\K h_\K G(lh_\K)e^{lh_\K\partial_x\phi(t_\K,x_\K)+C(\varepsilon)}=0,
\end{equation}
and that
\begin{equation}
\label{eq:hamiltonian part}
    \liminf\limits_{\K\rightarrow 0} \sum_{\vert l\vert \leq \lfloor \frac{M}{h_\K}\rfloor}h_\K G(lh_\K)e^{lh_\K\partial_x\phi(t_\K,x_\K)+C(\varepsilon)}\geq \int_{\vert y\vert \leq M}G(y)e^{\partial_x \phi(t,x) .y}dy.
    \end{equation}
We start by proving \eqref{eq:nul part}. Since $(t_\K,x_\K)$ converges, there exists $b>0$ such that $\vert\partial_x \phi(t_\K,x_\K) \vert\leq b.$
We have
\begin{align*}
   \sum_{ \vert l\vert \leq \lfloor \frac{M}{h_k}\rfloor } (\vert l\vert+1)\delta_\K h_\K G(lh_\K )e^{bh_\K\vert l\vert +C(\varepsilon)}
   &= \varepsilon \sum_{ \vert l\vert \leq \lfloor \frac{M}{l}\rfloor } (\vert lh_\K\vert+h_\K) h_\K G(lh_\K)e^{b\vert lh_\K\vert+C(\varepsilon)}.
\end{align*}
The Riemann sum $\sum_{ \vert l\vert \leq \lfloor \frac{M}{h_k}\rfloor } (\vert lh_\K\vert+h_\K) h_\K G(lh_\K)e^{b\vert lh_\K\vert+C(\varepsilon)}$ converges to $\int_{\vert y\vert \leq M}\vert y\vert G(y)e^{b\vert y \vert}dy.$ Thus $\varepsilon \sum_{ \vert l\vert \leq \lfloor \frac{M}{h_k}\rfloor } (\vert lh_\K\vert +h_\K) h_\K G(lh_\K)e^{b\vert lh_\K\vert+C(\varepsilon)}$ converges to zero. Hence \eqref{eq:nul part} is proved.\\
We now show  \eqref{eq:hamiltonian part}. Since $\phi$ is a $C^\infty$ function, for any  $\eta >0$ and sufficiently small $\K,$ we have  $$\vert \partial_x \phi(t_\K,x_\K)- \partial_x\phi(t,x)\vert \leq \eta. $$
 Therefore,
\begin{align*}
    \sum_{\vert l\vert \leq \lfloor\frac{M}{h_\K}\rfloor}h_\K G(lh_\K)e^{ lh_\K\partial_x\phi(t_\K,x_\K)+C(\varepsilon)}&\geq  \sum_{\vert l\vert \leq \lfloor\frac{M}{h_\K}\rfloor}h_\K G(lh_\K)e^{ lh_\K\partial_x \phi(t,x)-\eta M+C(\varepsilon)}.
\end{align*}
This is a Riemann sum which converges to $\int_{\vert y\vert \leq M}G(y)e^{\partial_x\phi(t,x)y-\eta M}dy.$
By dominated convergence, we let $\eta$ tend to zero and we deduce \eqref{eq:hamiltonian part}.
From \eqref{eq:nul part} and \eqref{eq:hamiltonian part}, and letting $M$ tend to infinity, we conclude that 
$$\partial_t\phi(t,x)\geq R(x,\overline{I}(t))+p(x)\int_{\mathbb{R}}G(y)e^{\partial_x\phi(t,x)y}dy.$$
We now prove that $1-\vert \partial_x \phi(t,x)\vert \geq 0.$ To do so, we require the following lemma.
\begin{lem}
    For any $(t,x,y)\in (0,+\infty)\times \mathbb{R}^2,$ we have 
    \begin{align}  
    \underline{u}(t,x+y)-\underline{u}(t,x)\leq \vert y \vert.
    \end{align}
    i.e $\underline{u}$ is $1$-Lipschitz continuous in space, uniformly in time.
\end{lem}
The proof of this lemma will be given after the end of the proof of Proposition 5.1.\\

Since $\underline{u}-\phi$ is  minimal at  $(t,x),$ by Lemma 5.2 we have for all $y\in \R$
$$\phi(t,x+y)-\phi(t,x)\leq \underline{u}(t,x+y)-\underline{u}(t,x)\leq \vert y \vert. $$
 We conclude that  $$\vert \partial_x \phi(t,x)\vert \leq 1.$$
 Therefore, we have proved that $\underline{u}$ is a viscosity supersolution of \eqref{eq:HJ} in $(0,+\infty)\times \mathbb{R}.$\\
\textbf{Step 2: Initial condition.} By Lemma 4.1 and the uniform convergence of $u^{\K,0}$ we deduce that $\underline{u}(0,x)=u^0(x)$ for all $x\in \mathbb{R}.$  This concludes the proof.
\end{proof}
\subsubsection*{Proof of Lemma 5.2.}  
By contradiction, we assume that there exist $(t,x,y_0)\in (0,+\infty)\times \mathbb{R}^2$ and $\alpha >0$ such that
\begin{equation*}   
\underline{u}(t,x+y_0)-\underline{u}(t,x)\geq \vert y_0\vert +\alpha.
\end{equation*}
Since $\underline{u}$ is lower semi-continuous, there exists $\gamma >0$ such that
$$ \underline{u}(t,x+y)\geq \underline{u}(t,x+y_0)-\alpha/4,~~~~~\forall y\in (y_0-\gamma, y_0+\gamma).$$
We deduce that
\begin{equation}
\label{eq:lipschitz contradiction}
    \underline{u}(t,x+y)-\underline{u}(t,x)\geq \vert y\vert +5/8\alpha,~\forall y\in \mathcal{A}_0:=(y_0-\min(\gamma,\alpha/8), y_0+\min(\gamma,\alpha/8)).
\end{equation}
Let $\phi \in C^1((0,+\infty)\times \mathbb{R}) $ be such that $(t,x)$ is a strict global minimum of $\underline{u}-\phi$ on $(0,+\infty)\times \R.$
Then, there exists a sequence $(t_\K,x_\K)$ such that $(t_\K,x_\K)$ is a global minimum of $\widetilde{u}^\K-\phi$ and
$(t_\K,x_\K)\rightarrow(t,x),$ and $\widetilde{u}^\K(t_\K,x_\K)\rightarrow \underline{u}(t,x),$ as $\K\rightarrow 0.$ Therefore, by definition of $\underline{u},$ we have for any $y\in \mathcal{A}_0$ and $\K$ sufficiently small
\begin{equation}
\label{eq:approximation of lipschitz contradiction}
\widetilde{u}^\K(t_\K,i_\K\delta_\K+y)\geq \underline{u}(t,x+y)-\alpha/4,~~~~\text{and}~~~-\widetilde{u}^\K(t_\K,x_\K)\geq -\underline{u}(t,x)-\alpha/4,
\end{equation}
where $i_\K=\lfloor \frac{x_\K}{\delta_\K}\rfloor.$ By \eqref{eq:lipschitz contradiction}-\eqref{eq:approximation of lipschitz contradiction}  and the Lipschitz estimates of $u^\K$, we obtain
\begin{equation}
\label{eq:control}
    \widetilde{u}^\K(t_\K,i_\K\delta_\K+y)-u^\K_{i_\K}(t_\K)\geq \vert y \vert +\alpha/8-C(x)\delta_\K,~\forall y\in \mathcal{A}_0,
\end{equation}
where $C(x)$ is the spatial Lipschitz constant of $u^\K$ in $[x-1,x+1].$ Using Assumption \ref{item:4}
we deduce that
\begin{align*}
    \sum_{l\in \mathbb{Z}}p((l+i_\K)\delta_\K)&h_\K G(lh_\K)e^{\frac{1}{\K}(u^\K_{l+i_\K}(t_\K)-u^\K_{i_\K}(t_\K))} \geq \underline{p} \sum_{\{l\in \mathbb{Z}/~l\delta_\K \in \mathcal{A}_0\}}h_\K G(lh_\K)e^{\vert l h_\K \vert +\alpha/(8\varepsilon)-C(x)h_\K}\\
    &\geq \underline{p} \sum_{\{l\in \mathbb{Z}/~l\delta_\K \in \mathcal{A}_0\}}h_\K f(l h_\K)e^{\alpha/(8\varepsilon)-C(x)h_\K}\\
    &\geq \underline{p}h_\K \min_{x\in \R}f(x)\big(\lfloor \frac{y_0+\min(\gamma,\alpha/8))}{\delta_\K} \rfloor-(\lfloor \frac{y_0-\min(\gamma,\alpha/8)}{\delta_\K} \rfloor+1)\big)e^{\alpha/(8\varepsilon)-C(x)h_\K}\\
    &\geq 2\underline{p} \min_{x\in \R}f(x) \min(\gamma,\alpha/8)\frac{1}{\K}e^{\alpha/(8\varepsilon)-C(x)h_\K}. 
\end{align*}
Proceeding similarly using \eqref{eq:control} and the Lipschitz estimates of $u^\K$, we obtain the same  bound  with $i_\K+1$ instead $i_\K$. Since $\widetilde{u}^\K-\phi$ has a minimum at $(t_\K,x_\K)$ with $t_\K>0,$ we deduce that
$$\partial_t \phi(t_\K,x_\K)\geq \partial_t \widetilde{u}^\K(t_\K,x_\K)\geq -\overline{R}+ 2\underline{p} \min_{x\in \R}f(x) \min(\gamma,\alpha/8)\frac{1}{\K}e^{\alpha/(8\varepsilon)-C(x)h_\K},$$ 
which converges to $+\infty$ when $\K$ goes to zero.  This is a contradiction with the fact that the sequence $(t_\K,x_\K)_\K$ converges to $(t,x)$.
\hfill $\blacksquare$
\subsection{Viscosity subsolution}
\begin{pro}
   Under Assumptions \ref{item:1}-\ref{item:7}, we have $\overline{u}=\underline{u},$ and that the function $\overline{u}$ is a viscosity subsolution for the equation \eqref{eq:HJ}.
\end{pro}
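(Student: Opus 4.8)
The plan is to establish two things: first that $\overline u$ is a viscosity subsolution of \eqref{eq:HJ}, and second that $\overline u \le \underline u$ (the reverse inequality $\underline u \le \overline u$ is immediate from the definitions of the semi-relaxed limits). Combined with Proposition 5.1, which gives that $\underline u$ is a supersolution, the comparison principle for \eqref{eq:HJ} will then force $\overline u = \underline u =: u$ and identify it as the unique continuous viscosity solution. I would organise the subsolution proof in the same two-step structure as Proposition 5.1 (interior equation, then initial condition), and treat the equality $\overline u = \underline u$ as a corollary once both one-sided solution properties are in hand.

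\textbf{Step 1: interior subsolution property.} Let $\phi \in C^\infty((0,+\infty)\times\mathbb R)$ and let $(t,x)$ be a strict global maximum of $\overline u - \phi$. As in the supersolution argument, invoke the standard semi-relaxed-limit lemma (\cite{G}, Lemma 4.2) to produce a subsequence $K$ and points $(t_K,x_K)\to(t,x)$ with $(t_K,x_K)$ a global maximum of $\widetilde u^K - \phi$ and $\widetilde u^K(t_K,x_K)\to\overline u(t,x)$. At a maximum of the interpolation one gets $\partial_t\phi(t_K,x_K)\le \partial_t\widetilde u^K(t_K,x_K)$, and writing out \eqref{eq:interpolation} this is a convex combination of $\frac{d}{dt}u^K_{i_K}(t_K)$ and $\frac{d}{dt}u^K_{i_K+1}(t_K)$; feeding in \eqref{eq:system of ODE} gives a lower bound of the form $\partial_t\phi(t_K,x_K)\ge R(\cdot)+\sum_l p(\cdot)h_KG(lh_K)e^{\log K(u^K_{l+i_K+\bullet}-u^K_{i_K+\bullet})}$. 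Since $(t_K,x_K)$ is a maximum of $\widetilde u^K-\phi$, we have $u^K_{l+i_K}(t_K)-u^K_{i_K}(t_K)\le \phi(t_K,(l+i_K)\delta_K)-\phi(t_K,x_K)+C(x)\delta_K$ — the opposite inequality to the one used in Proposition 5.1 — so the exponential terms are now bounded \emph{above} by $e^{lh_K\partial_x\phi+o(1)}$. The key difference is that for a subsolution we want an \emph{upper} bound on the sum, so we cannot simply truncate to $|l|\le \lfloor M/h_K\rfloor$ and drop the tail; we must control the whole series $\sum_{l\in\mathbb Z}h_KG(lh_K)e^{lh_K\partial_x\phi(t_K,x_K)+o(1)}$. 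Here the $1$-Lipschitz bound on $\phi$ at the maximum (from Lemma 5.2, exactly as in the supersolution proof) gives $|\partial_x\phi(t,x)|\le 1$, hence $|\partial_x\phi(t_K,x_K)|\le 1+o(1)$; combined with the $L<1$ Lipschitz estimate of Lemma 4.1 on $u^K$, the exponents are $\le (L')h_K|l|$ with $L'<1$ for $K$ large, so Lemma 3.1 applies and the full sum converges to $\int_{\mathbb R}G(y)e^{\nabla\phi(t,x)\cdot y}dy$, which is finite precisely because $|\nabla\phi(t,x)|\le1$ falls in the convergence range \eqref{kernel conditions}. Passing to the limit yields $\partial_t\phi(t,x)-R(x)-p(x)\int_{\mathbb R}G(y)e^{\nabla\phi(t,x)\cdot y}dy\le 0$, and since also $1-|\nabla\phi(t,x)|\ge0$, the minimum in \eqref{eq:HJ} is $\le0$: $\overline u$ is a subsolution in $(0,+\infty)\times\mathbb R$. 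The initial-condition step $\overline u(0,\cdot)=u^0$ follows from Lemma 4.1 and the locally uniform convergence of $\widetilde u^{K,0}$ exactly as in Proposition 5.1, Step 2.

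\textbf{Step 2: $\overline u = \underline u$.} By construction $\underline u \le \overline u$ everywhere, $\underline u$ is lower semicontinuous and $\overline u$ upper semicontinuous, and by Step 1 together with Proposition 5.1, $\overline u$ is a (USC) subsolution and $\underline u$ a (LSC) supersolution of \eqref{eq:HJ} with $\overline u(0,\cdot)=\underline u(0,\cdot)=u^0$. The comparison principle for the Hamilton-Jacobi equation \eqref{eq:HJ} — which must be invoked here; for the degenerate/obstacle Hamiltonian one uses the standard doubling-of-variables argument, noting that on the region where the constraint $|\nabla u|\le1$ is active the Hamiltonian is coercive and the obstacle term only helps, cf. \cite{Sepideh,Chasseine} — then gives $\overline u \le \underline u$, hence equality. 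The common value $u := \overline u = \underline u$ is therefore continuous, and standard semi-relaxed-limit theory upgrades the pointwise equality to locally uniform convergence $\widetilde u^K \to u$; uniqueness of $u$ among continuous viscosity solutions is again the comparison principle. This completes the proof of Theorem 2.1.

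\textbf{Main obstacle.} The delicate point is controlling the \emph{entire} mutation series for the subsolution inequality rather than a truncation: one genuinely needs $|\nabla\phi|\le1$ at the test point to place the exponent inside the convergence window \eqref{kernel conditions}, and this is why Lemma 5.2 (the $1$-Lipschitz bound on the semi-relaxed limits) is indispensable and must be applied \emph{before} one can even write down the limiting Hamiltonian. A secondary subtlety is that the discrete Lipschitz constant $L<1$ of Assumption \ref{item:4} is what keeps the finite-$K$ sums uniformly summable (via the $\alpha(\cdot)$ of Lemma 3.1) so that the limit can be taken; if $L$ were allowed to equal $1$ the uniform bound would fail.
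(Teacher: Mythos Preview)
Your proposal follows the classical semi-relaxed-limit route (prove $\overline u$ is a subsolution first, then invoke comparison), but this is precisely the approach the paper says cannot be carried out directly here because the Hamiltonian takes the value $+\infty$. The gap is in your Step~1, where you claim to control the \emph{full} series $\sum_{l\in\mathbb Z}h_KG(lh_K)e^{\log K(u^K_{l+i_K}-u^K_{i_K})}$ from above. You invoke two ingredients, and both fail. First, Lemma~5.2 is stated and proved for $\underline u$, not $\overline u$; its proof relies on the sum blowing up \emph{from below} at a minimum of $\underline u-\phi$, and there is no symmetric argument giving a $1$-Lipschitz bound on $\overline u$ a priori. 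Second, Lemma~4.1 does \emph{not} give an $L$-Lipschitz estimate on $u^K$ for $t>0$: it only yields $u^K_{l+i}(t)-u^K_i(t)\le L|l|\delta_K+(C_1-C_2)t$, and after multiplying by $\log K$ the term $(C_1-C_2)t\log K$ produces a factor $K^{(C_1-C_2)t}$ in front of the sum, which blows up. Even granting $|\nabla\phi(t,x)|\le 1$, you would only get $|\nabla\phi(t_K,x_K)|\le 1+o(1)$, and the integral $\int G(y)e^{py}\,dy$ diverges for $|p|\ge 1$, so the tail of the series remains uncontrolled.

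The paper circumvents this by inverting the order: it first proves $\overline u=\underline u$ \emph{directly} (Proposition~5.4), without any comparison principle, by regularising the supersolution $\underline u$ through several steps (inf-convolution in time, multiplication by $\mu<1$, truncation by $-B$, mollification) into a smooth strict supersolution $\underline u^B_{\circ,\gamma,\mu,\varepsilon}$ with $|\nabla\underline u^B_{\circ,\gamma,\mu,\varepsilon}|\le\mu<1$. This smooth function is then used as a test function against $\widetilde u^K$, and because its gradient is bounded \emph{strictly} below $1$, the full mutation series is summable uniformly in $K$ via Lemma~3.1. Only after $\overline u=\underline u$ is established does the paper deduce the subsolution property, now exploiting that $\overline u=\underline u$ is $1$-Lipschitz (from Lemma~5.2), which allows one to assume without loss of generality that the test function satisfies $|\nabla\phi|<1$ globally when $|\nabla\phi(t_0,x_0)|<1$.
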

Since the Hamiltonian \eqref{eq:Hamiltonian} can take infinite values, we cannot prove directly that $\overline{u}$ is a viscosity subsolution. Note for example that if $\phi$ is a smooth function such that $\overline{u} - \phi$ attains a maximum at some point $(t,x)$ with $|\nabla \phi(t,x)| < 1$, then passing to the limit in the mutation term, as in the proof of Proposition 5.1, needs to control the remainder of the sum. This requires that $\|\nabla \phi(t,\cdot)\|_{L^\infty(\mathbb{R})} < 1$. 
We can assume this latter condition only if $\overline{u}$ is $1$-Lipschitz. Therefore, we first prove the equality $\underline{u} = \overline{u}$, and then use the fact that $\underline{u}$ is $1$-Lipschitz, which allows us to conclude that $\overline{u}$ is a subsolution.
\begin{pro}
We have for any $t\geq0,$  $\overline{u}(t,.)=\underline{u}(t,.).$ In addition $\overline{u}(0,.)=\underline{u}(0,.)=u^0(.).$
\end{pro}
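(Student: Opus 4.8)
The plan is to show first that $\overline{u} \leq \underline{u}$ (the reverse inequality $\underline{u} \leq \overline{u}$ being immediate from the definitions of the semi-relaxed limits). The key obstacle here is the same one that obstructs the classical semi-relaxed-limit argument: the Hamiltonian $H(x,p) = -R(x) - p(x)\int_{\R} G(y)e^{p\cdot y}\,dy$ takes the value $+\infty$ when $|p| \geq 1$, so we cannot simply invoke a comparison principle for \eqref{eq:HJ} directly on $\overline{u}$ and $\underline{u}$. Instead, following \cite{Sepideh,Chasseine}, I would exploit what we already know about $\underline{u}$ — namely, from Proposition 5.1, that $\underline{u}$ is a viscosity supersolution of \eqref{eq:HJ}, hence in particular (by Lemma 5.2) $1$-Lipschitz in space — together with a perturbed-test-function / doubling-of-variables argument carried out at the discrete level on $u^K$.

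The core step is a discrete-to-continuous comparison. Fix $T > 0$ and suppose for contradiction that $\sup_{[0,T]\times\R}(\overline{u} - \underline{u}) = \theta > 0$; since both functions grow at most linearly and $\overline{u} - \underline{u} \leq 0$ at $t = 0$ by Assumption \ref{item:5} and Lemma 4.1, this supremum is attained (after adding a small penalization $-\eta|x|^2 - \sigma/(T-t)$ in the usual way) at an interior point $(\bar t, \bar x)$ with $\bar t > 0$. One then doubles the time variable and, crucially, also works with the interpolant $\widetilde{u}^K$ against a test function built from $\underline{u}$ itself. The mechanism is: because $\overline{u}$ is realized as a $\limsup$ of $\widetilde{u}^K$, there are points $(s_K, y_K) \to (\bar t, \bar x)$ with $\widetilde{u}^K(s_K,y_K) \to \overline{u}(\bar t,\bar x)$; one writes down the ODE \eqref{eq:system of ODE} satisfied by $u^K$ at the nearby grid point $i_K = \lfloor y_K/\delta_K\rfloor$, and estimates the mutation sum from above using the fact that $\underline{u}$ is $1$-Lipschitz in space (Lemma 5.2) — this is precisely what keeps $\int G(y)e^{\nabla\cdot y}\,dy$ finite and prevents the blow-up seen in the proof of Lemma 5.2. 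The Lipschitz estimates of Proposition 4.2 are used to pass from grid values $u^K_{i_K}$ to the interpolant $\widetilde{u}^K$, and from $u^K_{l+i_K}$ back to $\underline{u}$ at the corresponding points, with errors $O(\delta_K) = o(1)$. Combining this upper bound on $\frac{d}{dt}u^K_{i_K}$ with the supersolution property of $\underline{u}$ against the same test function yields, in the limit $K \to \infty$ and then $\eta,\sigma \to 0$, the inequality $\theta \leq 0$, a contradiction.

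I expect the main difficulty to be the bookkeeping in this mixed discrete/continuous doubling argument: one must simultaneously (i) control the interpolation error using Proposition 4.2 on a neighborhood of $\bar x$ uniformly in $K$, (ii) ensure the mutation sum is controlled by a convergent integral by invoking the $1$-Lipschitz bound on $\underline{u}$ together with Lemma 3.1, and (iii) keep the penalization terms $-\eta|x|^2$ and $-\sigma/(T-t)$ coherent with the at-most-linear growth of $u^K$ from Lemma 4.1 so that maxima are attained. Once $\overline{u} = \underline{u}$ is established on $(0,+\infty)\times\R$, the statement $\overline{u}(0,\cdot) = \underline{u}(0,\cdot) = u^0(\cdot)$ follows exactly as in Step 2 of the proof of Proposition 5.1, from the local uniform convergence of the interpolated initial data $\widetilde{u}^{K,0} \to u^0$ (Assumption \ref{item:5}) and the time-continuity estimates coming from Lemma 4.1.
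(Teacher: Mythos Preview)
Your proposal has the right overall architecture --- use (a version of) $\underline{u}$ as a test function and compare against $\widetilde{u}^K$ at the discrete level --- and you correctly identify Proposition~4.2 and Lemma~4.1 as the tools controlling interpolation errors and growth. However, there is a genuine gap at the heart of the argument.

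You write that the $1$-Lipschitz property of $\underline{u}$ ``is precisely what keeps $\int G(y)e^{\nabla\cdot y}\,dy$ finite.'' This is false: by Assumption~\ref{item:2}, $\int_{\R} G(y)e^{py}\,dy = +\infty$ whenever $|p| \geq 1$, so the boundary case $|p| = 1$ already lies outside the domain of finiteness. Concretely, if at the maximum point you use the $1$-Lipschitz bound to get $u^K_{l+i_K}(t_K) - u^K_{i_K}(t_K) \leq |l|\delta_K + o(1)$, the mutation sum is only controlled by $\sum_l h_K G(lh_K) e^{|l|h_K + o(1)} = \sum_l h_K f(lh_K) e^{o(1)}$, which diverges since $f$ is bounded below by a positive constant. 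No useful upper bound on $\frac{d}{dt}u^K_{i_K}$ follows, and the contradiction does not close.

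The paper's proof resolves this with a step you have not mentioned: one multiplies $\underline{u}$ (after a time shift and an inf-convolution in time) by a factor $\mu \in (0,1)$. Using the concavity of the Hamiltonian in the gradient variable, this yields a supersolution of a perturbed equation with $|\nabla(\mu\underline{u}_{\circ,\gamma})| \leq \mu < 1$, strictly inside the domain of the Hamiltonian. After taking $\max(-B,\cdot)$, mollifying to $C^1$, and adding $C(1-\mu)t$ to obtain a \emph{strict} supersolution, one has a smooth test function $\underline{u}^B_{\circ,\gamma,\mu,\varepsilon}$ with $|\nabla| \leq \mu$; against this the mutation sum is genuinely controlled via Lemma~3.1, the comparison with $\widetilde{u}^K$ goes through, and one sends $\varepsilon,\gamma \to 0$, $\mu \to 1$, $B \to \infty$ at the end. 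Your proposal also elides the regularity issue: $\underline{u}$ is merely lower semicontinuous and cannot serve directly as a test function; the inf-convolution and mollification are what supply the $C^1$ regularity the argument needs.
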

This result provides the convergence of $\widetilde{u}^\K$  to a continuous function. To prove this result, we first regularize the supersolution and modify it to satisfy certain required properties. Then, we use it as a test function that we compare with  $\overline{u}.$
\begin{proof}
  For some technical reason  we replace $R$ by  $\widetilde{R}$ with $\widetilde{R}(.,.)=R(.,.)-(\overline{R}+\overline{p}).$ Then $\underline{u}-(\overline{R}+\overline{p})t$ is a supersolution of
  \begin{equation}
    \label{eq:HJ~}
    \begin{cases}
      \min( \partial_tu(t,x) -\widetilde{R}(x,I(t))-p(x)\int_{\mathbb{R}}G(y)e^{\nabla u(t,x).y}, 1-\vert \nabla u(t,x)\vert )=0,~~\forall (t,x)\in(0,+\infty)\times  \mathbb{R},\\
      u(0,.)=u^0(.).
      \end{cases}
    \end{equation}
  We start by modifying $\underline{u}-(\overline{R}+\overline{p})t$ at the initial time in the following way: for any $(t,x)\in [0,+\infty)\times \R,$ we define
\begin{equation}
\label{eq:transf at t0}
\underline{u}_\circ (t,x)=
    \begin{cases}
        \underline{u}(t,x)-(\overline{R}+\overline{p})t~~~\text{for}~t>0,\\
        \liminf\limits_{\underset{s>0}{s\rightarrow 0}  }\underline{u}(s,x)~~~\text{for}~t=0.
    \end{cases}
\end{equation}
Note that from Proposition 5.1 and the lower semi-continuity of $\underline{u},$ we obtain 
$$u^0(x)= \underline{u}(0,x)\leq \underline{u}_\circ (0,x).$$
Then, $\underline{u}_\circ$ is a viscosity supersolution of \eqref{eq:HJ~} also at $t=0.$\\ We will regularize $\underline{u}_{\circ}$ in several steps.\\
\textbf{Step 1: Lipschitz continuity in time:} We perform an inf-convolution to make it also Lipschitz continuous in time: for $\gamma>0,$ we define
 \begin{equation}
  \label{eq:inf-cov}
\underline{u}_{\circ.\gamma}(t,x)=\inf\limits_{s\in \R^+}\{  \underline{u}_\circ(s,x)+\frac{\vert t-s\vert^2 }{\gamma^2}\}.
\end{equation}
Note that from the lower bound in  \eqref{eq:semi lineair growth},  $\underline{u}_{\circ.\gamma}$ is well-defined. We show that this infimum is attained. For fixed $(t,x)\in (0,+\infty)\times \R,$ $\gamma>0,$ and $\eta>0,$ there exists $s_\eta\in \R^+$ such that $$\underline{u}_{\circ,\gamma}(t,x)\geq \underline{u}_{\circ}(s_\eta,x)+\frac{\vert t-s_\eta\vert^2}{\gamma^2}-\eta.$$
By \eqref{eq:semi lineair growth}, we obtain that
\begin{equation*} \frac{\vert t-s_\eta\vert^2}{\gamma^2} -\eta\leq \underline{u}_{\circ}(t,x)-\underline{u}_{\circ}(s_\eta,x)\leq C_1t-C_2s_\eta ,
\end{equation*}
  which implies that
\begin{equation*}
    (\frac{\vert t-s_\eta\vert }{\gamma}-\vert C_2\vert\gamma/2)^2-\eta\leq (C_1-C_2)t+(C_2\gamma)^2/4.
\end{equation*}
We deduce that $(s_\eta)_\eta$ is bounded uniformly in $\eta$ and $\gamma$. Then along subsequences of $\eta$, it converges to $s(t)\geq 0,$ and we have 
\begin{equation}
    \label{eq:minimum control}
    \vert t-s(t)\vert \leq C(t)\gamma,
\end{equation}
where $C(t)=\sqrt{(C_1-C_2)t+1}+1.$ Moreover, by the lower semi continuity of $\underline{u}_{\circ},$ we deduce 
\begin{equation*}
    \underline{u}_{\circ,\gamma}(t,x)\geq \underline{u}_{\circ}(s(t),x)+\frac{\vert t-s(t) \vert^2}{\gamma^2}.
\end{equation*}
Hence the infimum in \eqref{eq:inf-cov} is attained. Moreover, by \eqref{eq:minimum control} we deduce that $s(t)$ tends to $t$ when $\gamma$ tends to zero. Therefore, by the lower semi continuity of $\underline{u}_{\circ}$ and \eqref{eq:inf-cov}, we conclude that $\underline{u}_{\circ,\gamma}$   converges to $\underline{u}_\circ$ as $\gamma \rightarrow 0.$\\
We now prove that $\underline{u}_{\circ,\gamma}$ is a supersolution of a modified version of \eqref{eq:HJ~}. Let $\phi\in C^1((0,+\infty)\times \R)$ and let $(t_0,x_0)\in (0,+\infty)\times \R$ be a minimum of $\underline{u}_{\circ,\gamma}-\phi.$ Since the infimum in \eqref{eq:inf-cov} is attained, there exists $s(t_0)\geq 0,$ such that $(t_0,s(t_0),x_0)$ is a minimum of the following function 
$$(t,s,x)\mapsto \underline{u}_{\circ}(s,x)+\frac{\vert t-s\vert ^2}{\gamma^2}-\phi(t,x).$$
Since $\underline{u}_{\circ}$ is a supersolution of $\eqref{eq:HJ~}$ on $[0,+\infty)\times \R,$ we deduce that 
$$\frac{2(t_0-s(t_0))}{\gamma^2}-\widetilde{R}(x_0,\overline{I}(s(t_0)))-p(x_0)\int_{\R}G(y)e^{\nabla \phi(t_0,x_0).y}dy\geq 0.$$
Moreover, since $t_0$ is a minimum point of $t\mapsto \underline{u}_{\circ}(s(t_0),x_0)+\frac{\vert t-s(t_0)\vert ^2}{\gamma^2}-\phi(t,x_0),$ we deduce that $\partial_t\phi(t_0,x_0)=\frac{2(t_0-s(t_0))}{\gamma^2}.$ Then
\begin{equation}
\label{eq:inf-equ}
\partial_t\phi(t_0,x_0)-\widetilde{R}(x_0,\overline{I}(t_0))-p(x_0)\int_{\R}G(y)e^{\nabla \phi(t_0,x_0).y}dy\geq \widetilde{R}(x_0,\overline{I}(s(t_0)))-\widetilde{R}(x_0,\overline{I}(t_0)).
\end{equation} \\
\textbf{Step 2: Regularization to be in the domain of the Hamiltonian:}  Let us show that the $\mu$-Lipschitz continuous function $\mu \underline{u}_{\circ,\gamma} $ for $\mu\in(0,1),$ is a viscosity supersolution for a perturbed equation of \eqref{eq:inf-equ}. By \eqref{eq:inf-equ},  we deduce that the function  $\mu \underline{u}_{\circ,\gamma} $ satisfies in the viscosity sense 
 $$ \partial_t u(t,x)\geq \mu \widetilde{R}(x,\overline{I}(t))+\mu p(x)\int_{\R}G(y)e^{ \nabla u(t,x).y/\mu}dy+\mu\big(\widetilde{R}(x,\overline{I}(s(t)))-\widetilde{R}(x,\overline{I}(t))\big).$$
   Since, $e^{ \nabla u(t,x).y}\leq \mu e^{ \nabla u(t,x).y/\mu}+ 1-\mu,$ and $\widetilde{R}$ is non-positive, thus
\begin{equation}
\label{eq:mu-HJ}
    \begin{aligned}
     \partial_t u(t,x)\geq \widetilde{R}(x,\overline{I}(t))+p(x)\int_{\R}G(y)e^{\nabla u(t,x).y}dy-(1-\mu)\overline{p}+\mu\big(\widetilde{R}(x,\overline{I}(s(t)))-\widetilde{R}(x,\overline{I}(t))\big).
    \end{aligned}
    \end{equation}
\textbf{Step 3: Lower bound:} Since the  Hamiltonian is concave with respect to the gradient variable and $\mu \underline{u}_{\circ,\gamma}$ is a supersolution to \eqref{eq:mu-HJ}, and since any constant is a supersolution to \eqref{eq:mu-HJ}, we deduce that for any constant $B,$ the function $\underline{u}^B_{\circ,\gamma,\mu}=\max(-B,\mu\underline{u}_{\circ,\gamma})$ is a supersolution of \eqref{eq:mu-HJ} (see Appendix B for the proof).
Moreover, $\underline{u}^B_{\circ,\gamma,\mu}$ is Lipschitz-continuous in time and space, so it is differentiable almost everywhere,
and satisfies  \eqref{eq:mu-HJ} almost everywhere.\\
\textbf{Step 4: $C^1$ regularity:}
Let $\rho:\R^+\times \R\rightarrow \R^+$ be a smooth function with support in the unit ball such that $\int_{\R^+\times\R}\rho(t,x)dtdx=1.$ We define $\rho_\eta(.,.)=\frac{1}{\eta^2}\rho(\frac{.}{\eta},\frac{.}{\eta}),$ and $\widetilde{\underline{u}}^B_{\circ,\gamma,\mu,\eta}$, for any $(t,x)\in [0,+\infty)\times \R,$  by
$$\widetilde{\underline{u}}^B_{\circ,\gamma,\mu,\eta}(t,x)=\underline{u}^B_{\circ,\gamma,\mu}*\rho_\eta(t,x).$$
Since \eqref{eq:mu-HJ} holds almost everywhere, we have 
\begin{equation}
\label{eq:conv eq}
\begin{aligned}
  \partial_t \widetilde{\underline{u}}^B_{\circ,\gamma,\mu,\eta}(t,x)\geq \widetilde{R}(,I(,))*\rho_\eta(t,x)+\int_{\R^+\times \R} &\rho_\eta(t-s,x-z)p(z) \int_{\R}G(y)e^{\nabla \underline{u}^B_{\circ,\gamma,\mu}(s,z).y} dydsdz-(1-\mu)\overline{p}\\
  &+\mu\big(\widetilde{R}(.,\overline{I}(s(.)))-\widetilde{R}(.,\overline{I}(.))\big)*\rho_\eta(t,x).
  \end{aligned}
\end{equation}
Since $p$ is Lipschitz-continuous and $\rho_\eta$ has its support in the ball centered in $0$ with radius  $\eta,$ we have 
\begin{align*}
\int_{\R^+\times \R} \rho_\eta(t-s,x-z)p(z) \int_{\R}&G(y)e^{\nabla \underline{u}^B_{\circ,\gamma,\mu}(s,z).y} dydsdz\\
&\geq p(x)\int_{\R^+\times \R} \rho_\eta(t-s,x-z) \int_{\R}G(y)e^{\nabla \underline{u}^B_{\circ,\gamma,\mu}(s,z).y} dydsdz\\
  &-\Vert p \Vert_{Lip}\eta \int_{\R^+\times \R} \rho_\eta(t-s,x-z) \int_{\R}G(y)e^{ \nabla \underline{u}^B_{\circ,\gamma,\mu}(s,z).y} dydsdz. 
  \end{align*}
By Jensen inequality we have $$\int_{\R^+\times \R} \rho_\eta(t-s,x-z) \int_{\R}G(y)e^{ \nabla \underline{u}^B_{\circ,\gamma,\mu}(s,z).y} dydsdz\geq \int_{\R}G(y)e^{\nabla \widetilde{\underline{u}}^B_{\circ,\gamma,\mu,\varepsilon}(t,x).y} dy.$$
Moreover, we have 
$$\int_{\R^+\times \R} \rho_\eta(t-s,x-z) \int_{\R}G(y)e^{\nabla \underline{u}^B_{\circ,\gamma,\mu}(s,z).y} dydsdz\leq \int_{\R}G(y)e^{\mu \vert y\vert } dy=:C_\mu.$$
Then 
\begin{equation}
\label{eq:conv mutation term}
\int_{\R^+\times \R} \rho_\eta(t-s,x-z)p(z) \int_{\R}G(y)e^{\nabla \underline{u}^B_{\circ,\gamma,\mu}(s,z).y} dydsdz\geq p(x)\int_{\R}G(y)e^{\nabla \widetilde{\underline{u}}^B_{\circ,\gamma,\mu,\eta}(t,x).y} dy-C_\mu\Vert p\Vert_{Lip}\eta.
\end{equation}
Let us now estimate the term $(\widetilde{R}(.,\overline{I}(s(.)))-\widetilde{R}(.,\overline{I}(.)))*\rho_\eta(t,x).$
To this end, we define as in \cite{Sepideh} the set 
$$\mathcal{A}_\theta=\{\tau\in [t-\eta,t+\eta]/ ~\vert \overline{I}(s(\tau))-\overline{I}(\tau)\vert\leq \theta\},$$ 
where $\theta$ is a small positive parameter. We have 
\begin{align*}
    &\big(\widetilde{R}(.,\overline{I}(s(.)))-\widetilde{R}(.,\overline{I}(.))\big)*\rho_\eta(t,x)=\int_{\mathbb{R^+}\times \mathbb{R}}\rho_\eta(t-\tau,x-y)\big(\widetilde{R}(y,\overline{I}(s(\tau)))-\widetilde{R}(y,\overline{I}(\tau))\big)d\tau dy\\
&=\int_{\R}\int_{\mathcal{A}_\theta}\rho_\eta(t-\tau,x-y)\big(\widetilde{R}(y,\overline{I}(s(\tau)))-\widetilde{R}(y,\overline{I}(\tau))\big)d\tau dy \\
&~~~~~~~~~~~~~~~~~~~~~~~~~~~~~~~~~~~~~~~~~~~~~~~~~~~~+\int_{\R}\int_{\mathcal{A}^c_\theta}\rho_\eta(t-\tau,x-y)\big(\widetilde{R}(y,\overline{I}(s(\tau)))-\widetilde{R}(y,\overline{I}(\tau))\big)d\tau dy\\
&=F_1+F_2.
\end{align*}
We have 
$$\vert F_1\vert \leq A_1 \theta.$$
By the definition of $\rho_\eta$ we deduce that 
$$\vert F_2\vert \leq \frac{2\overline{R}}{\eta}\vert \mathcal{A}^c_\theta\vert.$$
 Moreover, $\overline{I}$ is non-decreasing and bounded, then we have at most $I_M/\theta$ jumps of amplitude $\theta.$ Therefore, there exist at most $I_M/\theta$ disjoint intervals  $[\tau_i,\tau_i+C\gamma]$ for some positive constant $C$ such that $I(\tau_i+C\gamma)-I(\tau_i)>\theta.$
 From \eqref{eq:minimum control}, we have $\vert s(\tau)-\tau\vert\leq C(\tau) \gamma\leq C(t+1)\gamma,$ which means that $s(\tau)\in [\tau-C(t+1)\gamma,\tau+C(t+1)\gamma],$
then  $$\mathcal{A}^c_\theta\subset \cup_{1\leq i\leq \frac{I_M}{\theta}}[\tau_i-C(t+1)\gamma,\tau_i+2C(t+1)\gamma].$$
Thus, 
$$\vert \mathcal{A}^c_\theta\vert  \leq \frac{3C(t+1)\gamma I_M}{\theta}.$$
We deduce that  
\begin{equation}
\label{eq:conv control}
    \big(\widetilde{R}(.,\overline{I}(s(.)))-\widetilde{R}(.,\overline{I}(.))\big)*\rho_\eta(t,x)\geq -A_1\theta -\frac{C'(t) \gamma}{\eta \theta},
    \end{equation}
where $C'(t)=6\overline{R}I_MC(t+1).$ 
Therefore from \eqref{eq:conv eq}, \eqref{eq:conv mutation term} and \eqref{eq:conv control} , we conclude that 
\begin{equation}
\begin{aligned}
    \partial_t \widetilde{\underline{u}}^B_{\circ,\gamma,\mu,\eta}(t,x)-\widetilde{R}(.,I(.))*\rho_\eta(t,x))-p(x)\int_{\R}G(y)e^{\nabla \widetilde{\underline{u}}^B_{\circ,\gamma,\mu,\eta}(t,x).y} dy\geq&- C_\mu\Vert p\Vert_{Lip}\eta -\overline{p} (1-\mu)\\
    &-A_1\theta -\frac{C'(t) \gamma}{\eta \theta}.
\end{aligned}
\end{equation}
Since $\vert \nabla \widetilde{\underline{u}}^B_{\circ,\gamma,\mu,\eta}\vert\leq \mu,$ then by the dominated convergence theorem and letting  $t$ go to zero, the above equation is satisfied up to $t=0.$\\
\textbf{Step 5: Strict supersolution:} Let $T>0.$ Let us define $\underline{u}^B_{\circ,\gamma,\mu,\eta}$ for a constant $K$ to be chosen later, for any $(t,x)\in[0,T]\times \R.$
$$\underline{u}^B_{\circ,\gamma,\mu,\eta}(t,x)=\widetilde{\underline{u}}^B_{\circ,\gamma,\mu,\eta}(t,x)+K(1-\mu)t.$$
We choose $K,\mu,\eta,\theta$ and $\gamma$ such that 
$$-C_\mu\Vert p\Vert_{Lip}\eta -A_1\theta -\frac{C'(T) \gamma}{\eta \theta}+(K-\overline{p} )(1-\mu)>0.$$ We obtain 
\begin{equation}
\label{eq:regularize supersolution}
    \partial_t \underline{u}^B_{\circ,\gamma,\mu,\eta}(t,x)> \widetilde{R}(.,I(.))*\rho_\eta(t,x))+p(x)\int_{\R}G(y)e^{\nabla \underline{u}^B_{\circ,\gamma,\mu,\eta}(t,x).y} dy~~\forall(t,x)\in [0,T]\times \R,
\end{equation}
and \begin{equation}
\label{eq:gradient bound}
    \vert \nabla \underline{u}^B_{\circ,\gamma,\mu,\eta}\vert\leq \mu.
\end{equation}

We now prove that $$\overline{u}\leq \underline{u}.$$
 Let $T>0.$ We want to prove that the supremum of $\overline{u}-(\overline{R}+\overline{p})t-\underline{u}^B_\circ$ on $[0,T]\times \R$ is non positive, where $\underline{u}^B_\circ=\max(-B,\underline{u}_\circ)$. By contradiction, we assume that \begin{equation}
 \label{eq:contradiction}
     \sup_{(t,x)\in [0,T]\times \R}\overline{u}(t,x)-(\overline{R}+\overline{p})t-\underline{u}^B_\circ(t,x)=a>0.
     \end{equation}
According to the upper bound in Lemma 4.1, and the fact that $\underline{u}^B_\circ\geq -B$, we deduce that the supremum is attained at some point $(t_0,x_0)\in [0,T]\times F$ with $F$ a compact set. 
 For $\gamma,\eta$ small enough, and $\mu$ close to $1,$ the function $\overline{u}-(\overline{R}+\overline{p})t-\underline{u}^B_{\circ,\gamma,\mu,\eta}$
has a positive maximum greater than $a/2$ at some point $(t_1,x_1)\in [0,T]\times F,$ we can take a larger compact set $F$ if necessary. To deal with the discontinuity in $\widetilde{R}$ we use the method of discontinuous viscosity solution  where the Hamiltonian is $L^1$ in time (see \cite{Ishi,PL}). In this respect, we define 
    $$g^\K_\eta(t)=\sup\limits_{x\in F} \Big(\widetilde{R}(x,I^\K(t))-\widetilde{R}(.,I^\K(.))*\rho_\eta(t,x)\Big),$$
$$g_\eta(t)=\sup\limits_{x\in F} \Big(\widetilde{R}(x,I(t))-\widetilde{R}(.,I(.))*\rho_\eta(t,x)\Big).$$
Since $\widetilde{R}$ is bounded and Lipschitz-continuous in $x$, and by the  almost everywhere convergence of $I^\K,$ and the uniform convergence of the convolution regularization, we deduce that   $$\int_{0}^{t}g^\K_\eta(s)ds \rightarrow  \int_{0}^{t}g_\eta(s)ds, ~ \text{as}~\K\rightarrow 0,$$
and  $$\int_{0}^{t}g_\eta(s)ds\rightarrow 0, ~~\text{as}~\eta\rightarrow 0.$$
Therefore for $\eta$ small enough $\overline{u}-(\overline{R}+\overline{p})t-\underline{u}^B_{\circ,\gamma,\mu,\eta}-\int_{0}^{t}g_\eta(s)ds$ attains a maximum at some point $(t,x)\in F.$
Then (see \cite{G}), there exist a subsequence of $\K$ and a sequence $(t_\K,x_\K)$ such that $\widetilde{u}^\K-(\overline{R}+\overline{p})t-\underline{u}^B_{\circ,\gamma,\mu,\eta}-\int_{0}^{t}g^\K_\eta(s)ds$ is maximal at $(t_\K,x_\K)$ and $\widetilde{u}^\K(t_\K,x_\K)$ converges to $\overline{u}(t,x)$ and $(t_\K,x_\K)$ converges to $(t,x),$ as $\K\rightarrow 0.$ \\
\textbf{Case 1:} $t>0.$ Then, for $\K$ sufficiently small, we have $t_\K>0,$ and we obtain that
\begin{equation}
\label{eq:inqp}
\partial_t\underline{u}^B_{\circ,\gamma,\mu,\eta}(t_\K,x_\K)+\overline{R}+\overline{p}+g^\K_\eta(t_\K)\leq \partial_t\widetilde{u}^\K(t_\K,x_\K)=(1-\frac{x_\K}{\delta _\K}+i_\K)\frac{d}{dt}u^\K_{i_\K}(t_\K)+(\frac{x_\K}{\delta_\K}-i_\K)\frac{d}{dt}u^\K_{i_\K+1}(t_\K),
\end{equation}
where $i_\K=\lfloor \frac{x_\K}{\delta_\K}\rfloor.$ Therefore, using the fact that $(t_\K,x_\K)$ is a maximum point, we have 
\begin{align*}
 \frac{d}{dt}u^\K_{i_\K}(t_\K)&=R(i_\K\delta_\K,I^\K(t_\K))+\sum_{l\in \Z}  p((l+i_\K)\delta_\K)h_\K G(lh_\K)e^{\frac{1}{\K}(u^\K_{l+i_\K}(t_\K)-u^\K_{i_\K}(t_\K))}\\
 &\leq R(i_\K\delta_\K,I^\K(t_\K))+\sum_{l\in \Z}  p((l+i_\K)\delta_\K)h_\K G(lh_\K)e^{\frac{1}{\K}(\underline{u}^B_{\circ,\gamma,\mu,\eta}(t_\K,(l+i_\K)\delta_\K)-\underline{u}^B_{\circ,\gamma,\mu,\eta}(t_\K,x_\K))+C(x)h_\varepsilon},
\end{align*}
where $C(x)$ is the Lipschitz estimates of $u^\varepsilon,$ in $[x-1,x+1].$
On the one hand, for $M>0$, we have 
\begin{align*}
    \sum_{\vert l h_\K\vert \leq M}  p((l+i_\K)\delta_\K)h_\K G(lh_\K)e^{\frac{1}{\K}(\underline{u}^B_{\circ,\gamma,\mu,\eta}(t_\K,(l+i_\K)\delta_\K)-\underline{u}^B_{\circ,\gamma,\mu,\eta}(t_\K,x_\K))+C(x)h_\varepsilon
    }\\
    \leq (p(x)+\varepsilon (M+1)\Vert p\Vert_{Lip})\sum_{\vert l h_\K\vert \leq M}h_\K G(lh_\K)e^{ lh_\K\nabla \underline{u}^B_{\circ,\gamma,\mu,\eta}(t_\K,x_\K)+C'(x)h_\varepsilon},
\end{align*}
where $C'(x)$ is a positive constant independent of $\K,$ and can depend on $\gamma,\mu$ and $\eta.$
Note that, for all $\beta>0,$  we have for $\K$ sufficiently small that $$\vert \nabla \underline{u}^B_{\circ,\gamma,\mu,\eta}(t_\K,x_\K) - \nabla \underline{u}^B_{\circ,\gamma,\mu,\eta}(t,x)\vert \leq \beta.$$
Moreover
$\sum_{\vert l h_\K\vert \leq M}h_\K G(lh_\K)e^{ lh_\K\nabla \underline{u}^B_{\circ,\gamma,\mu,\eta}(t,x)+\beta \vert lh_\K \vert +C'(x)h_\K}$ is a Riemann sum that converges to\\
$\int_{\vert y\vert \leq M}G(y)e^{\nabla \underline{u}^B_{\circ,\gamma,\mu,\eta}(t,x).y+\beta \vert y \vert}dy.$
Furthermore, by Assumption \ref{item:4}, we have 
\begin{align*}
  \sum_{\vert l h_\K\vert >M}p((l+i)\delta_\K)h_\K G(lh_\K)e^{\frac{1}{\K}(\underline{u}^B_{\circ,\gamma,\mu,\eta}(t_\K,(l+i_\K)\delta_\K)-\underline{u}^B_{\circ,\gamma,\mu,\eta}(t_\K,x_\K))+C(x)h_\varepsilon }&\leq  C \overline{p} \sum_{\vert l h_\K\vert >M} e^{-(1-\mu)\vert lh_\K\vert +C(x)h_\varepsilon}dy\\
  &\leq C \overline{p} \int_{\vert y\vert \geq M}e^{-(1-\mu)\vert y\vert +C(x)h_\varepsilon}dy.
\end{align*}
Since $\vert \nabla \underline{u}^B_{\circ,\gamma,\mu,\eta}(t,x)\vert \leq \mu,$ by the dominated convergence theorem, as $\beta $ tends to zero, we conclude that
\begin{align*}
    \limsup\limits_{\K\rightarrow 0}\sum_{ l \in \Z}  p((l+i_\K)\delta_\K)h_\K G(lh_\K)e^{\frac{1}{\K}(\underline{u}^B_{\circ,\gamma,\mu,\eta}(t_\K,(l+i_\K)\delta_\K)-\underline{u}^B_{\circ,\gamma,\mu,\eta}(t_\K,i_\K\delta_\K))+C(x)h_\K}\\
    \leq p(x)\int_{\vert y\vert \leq M}G(y)e^{\nabla \underline{u}^B_{\circ,\gamma,\mu,\eta}(t,x).y}dy+C\overline{p}\int_{\vert y\vert \geq M}e^{-(1-\mu)\vert y\vert}dy.
\end{align*}
We let $M$ go to infinity, we obtain 
\begin{equation}
\label{eq:inq1}
\begin{aligned}
\limsup\limits_{\K\rightarrow 0}\sum_{ l \in \Z}  p((l+i_\K)\delta_\K)h_\K G(lh_\K)e^{\frac{1}{\K}(\underline{u}^B_{\circ,\gamma,\mu,\eta}(t_\K,(l+i_\K)\delta_\K)-\underline{u}^B_{\circ,\gamma,\mu,\eta}(t_\K,i_\K\delta_\K))+C(x)h_\varepsilon}\\
    \leq p(x)\int_{\R}G(y)e^{\nabla \underline{u}^B_{\circ,\gamma,\mu,\eta}(t,x).y}dy.
\end{aligned}
\end{equation}
On the other hand, we have that
\begin{equation*}
\begin{aligned}
    R(i_\K\delta_\K,I^\K(t_\K))\leq \widetilde{R}(.,I^\K(.))*\rho_\eta(t_\K,i_\K \delta_\K)+\overline{R}+\overline{p}+g^\K_\eta(t_\K).  
    \end{aligned}
\end{equation*}
In addition $\widetilde{R}(.,I^\K(.))*\rho_\eta(,)$ is non-positive. Then, by Fatou's lemma and the pointwise convergence of $I^\K,$ we have that 
\begin{equation}
\label{eq:inq2}
    \limsup_{\K\rightarrow 0}\widetilde{R}(.,I^\K(.))*\rho_\eta(t_\K,i_\K \delta_\K)\leq \widetilde{R}(.,I(.))*\rho_\eta(t,x).
\end{equation}
In the same way, we show a similar bound if  $i_\K$ is replaced by $i_\K+1$.
We deduce from \eqref{eq:inqp},\eqref{eq:inq1} and \eqref{eq:inq2} that
$$\partial_t \underline{u}^B_{\circ,\gamma,\mu,\eta}(t,x)\leq \widetilde{R}(.,I(.))*\rho_\eta(t,x)+p(x)\int_{\R}G(y)e^{\nabla \underline{u}^B_{\circ,\gamma,\mu,\eta}(t,x) .y}dy.$$ 
 It is a contradiction with \eqref{eq:regularize supersolution}.\\
\textbf{Case 2} $t=0.$ 
If there exists a subsequence of $t_\K$ such that for $\K,$ small enough,   $t_\K=0,$ then $$\widetilde{u}^\K(t_\K,x_\K)=u^{\K,0}(i_\K\delta_\K)(1-\frac{x_\K}{\delta_\K}+i_\K)+u^{\K,0}((i_\K+1)\delta_\K)(\frac{x_\K}{\delta_\K}-i_\K).$$ Since $\overline{u}(0,x)=u^0(x)\leq \underline{u}^{B}_\circ(0,x),$ then for $\gamma$ and $\eta$  small enough, and $\mu$ close to $1,$ we have $$\overline{u}(0,x)-\underline{u}^B_{\circ,\gamma,\mu,\eta}(0,x)<a/2.$$ This is in contradiction with \eqref{eq:contradiction}. 
We can assume $t_\K>0,$ and treat this case in a similar way as in Case 1.\\
Therefore, we conclude that $\overline{u}(t,.)-(\overline{R}+\overline{p})t\leq \underline{u}_{\circ}^B(t,.)$ for all $t\geq 0.$
We let $B$ go to infinity to obtain $\overline{u}(t,.)-(\overline{R}+\overline{p})t\leq \underline{u}_{\circ}(t,.)$ for all $t\geq 0,$
where $\underline{u}_{\circ}(t,.)$ was defined in \eqref{eq:transf at t0}. We deduce that $\overline{u}(t,.)= \underline{u}(t,.)$ for all $t>0.$
All that remains is to verify the inequality at $t=0$. We use the second and third inequalities in \eqref{eq:semi lineair growth} of Lemma 4.1 and the uniform convergence of $u^{\K,0}$ to deduce that $$\overline{u}(0,x)=\underline{u}(0,x)=u^0(x).$$
\end{proof}

\subsubsection*{Proof of Proposition 5.3}
Let $\phi\in C^\infty((0,+\infty)\times \R)$ and let $(t_0,x_0)\in (0,+\infty)\times \R $ be a maximum point of $\overline{u}-\phi.$ If $1-\vert\nabla \phi(t_0,x_0)\vert\leq 0,$ we deduce that $\overline{u}$ is a viscosity subsolution of \eqref{eq:HJ} at $(t_0,x_0).$ Hence, we assume that $\vert\nabla \phi(t_0,x_0)\vert<1.$ Since $\overline{u}$ is 1-Lipschitz, without loss of generality we can assume that $\vert\nabla \phi(t_0,x)\vert<1,$ for all $x\in \R.$ In a similar way as in the proof of Proposition 5.4, we show that
$$\partial_t\phi(t_0,x_0)\leq R(x_0,\underline{
I}(t_0))+p(x_0)\int_{\R}G(y)e^{\nabla \phi(t_0,x_0).y}dy,$$
where $\underline{I}(t)=\liminf\limits_{s\rightarrow t}I(s).$ We conclude that $\overline{u}$ is a viscosity subsolution of \eqref{eq:HJ} in $(0,+\infty)\times \R.$ Moreover, by Proposition 5.4 we deduce that  $\overline{u}$ is a viscosity subsolution of \eqref{eq:HJ} in $[0,+\infty)\times \R.$
\hfill $\blacksquare$
\section{Proof of Theorem 2.1} From Propositions 5.1, 5.3, and 5.4, we conclude that $\widetilde{u}^\K$ converges to a continuous function $u=\overline{u}=\underline{u},$ which is a viscosity solution of the equation \eqref{eq:HJ}. We now prove the constraint $\max\limits_{x\in \mathbb{R}} u(t,x)=0.$ Using similar arguments as in \cite{BMP}, we assume by contradiction that there exists $(t_0,x_0)$ such that $u(t_0,x_0)=a>0.$ By continuity of $u$ there exists $r>0 $ such that $u(t_0,x)\geq a/2$ for all $x\in B(x_0,r).$  Hence, by the uniform convergence of $\widetilde{u}^\K,$ we have for all $i\in \mathbb{Z}$ such that $i\delta_\K\in B(x,r),$ that $n^\K_i(t_0)=e^{\frac{u^\K_i(t_0)}{\varepsilon}}\geq e^{\frac{a}{\varepsilon}}\rightarrow +\infty ,$ as $\K\rightarrow 0.$ Then $I^\K(t_0)\rightarrow +\infty,$ which is in contradiction with \eqref{eq:bound of totale size}, hence $\max_{x\in\R}u(t,x)\leq 0$. We prove now that $\max\limits_{x\in \mathbb{R}}u(t,x)\geq 0,~\forall t>0.$ Let $t>0.$ By the upper bound in Lemma 4.1, we have $u^\K_i(t)\leq -A\vert i\delta_\K\vert +B_1+C_1t,$ so for $M$ sufficiently large, we have 
\begin{align*}
I_m&\leq  \limsup\limits_{\K\rightarrow 0}\sum_{\{i\in \mathbb{Z}/\vert i\delta_\K \vert \geq M\}} \delta_\K e^{\frac{1}{\K}(-A\vert i\delta_\K\vert +B_1+C_1t)}+\limsup\limits_{\K\rightarrow 0}\sum_{\{i\in \mathbb{Z}/\vert i\delta_\K \vert < M\}}\delta_\K n^\K_i(t)\\
&\leq 
\limsup\limits_{\K\rightarrow 0}\sum_{\{i\in \mathbb{Z}/\vert i\delta_\K \vert < M\}}\delta_\K n^\K_i(t).
\end{align*}
If for all $\vert x\vert <M,$  $u(t,x)<0,$ we have $n^\K_i(t)=e^{\frac{u^\K_i(t)}{\varepsilon}}\rightarrow 0,$ as $\K\rightarrow0$ uniformly for all $i\in \mathbb{Z}$ such that $\vert i\delta_\K\vert <M.$ This is in contradiction with the above inequality.\\
Let us now prove the last part of Theorem 2.1. From \eqref{eq:bound of totale size} we have  $\Vert \widetilde{n}^\K\Vert_{L^\infty((0,+\infty),L^1(\R))}\leq 2I_M.$ Then  along a subsequence of $\K$ the sequence $(\widetilde{n}^\K)_\K$ has a weak limit measure $n$ in $L^\infty(w*(0,+\infty),\mathcal{M}^1(\R)).$
We now prove \eqref{eq:concentration}. Let $(t_0,x_0)\in (0,+\infty)\times \mathbb{R}$ such that $u(t_0,x_0)=-a<0.$ By the uniform convergence of $\widetilde{u}^\K,$ and the continuity of $u,$ there exists $\eta>0$ such that for $\K$ small enough, we have $u^\K_i(t)\leq -a/2$ for all $(t,i\delta_\K)\in [t_0-\eta,t_0+\eta]\times B(x_0,\eta).$ Hence, $$\widetilde{n}^\K(t,x)\leq e^{-\frac{a}{2\varepsilon}},~~\forall (t,x)\in [t_0-\eta,t_0+\eta]\times B(x_0,\eta).$$ 
Therefore, by the dominated convergence theorem, we obtain

$$\int_{[t_0-\eta,t_0+\eta]\times B(x_0,\eta)}n(t,x)dtdx=\int_{[t_0-\eta,t_0+\eta]\times B(x_0,\eta)} \lim\limits_{\K\rightarrow 0}\widetilde{n}^\K(t,x)dtdx=0.$$
Thus $n(t_0,x_0)=0.$
\newpage
\section*{Appendix A}
\subsection*{Proof of Theorem 3.2.}
We define the infinite real matrix $A^\K=(a^\K_{i,j})_{i,j\in \mathbb{Z}}$ where 
$a^\K_{i,j}=p(j\delta_\K)h_\K G((j-i)h_\K),$ and  the  infinite diagonal matrix $D^\K(n(t))$ whose diagonal element $d^\K_i(n(t))$ is given by\\ $d^\K_i(n(t))=R(i\delta_\K,\delta_\K\Vert n(t)\Vert_{\ell^1\mathbb{(\Z)}})$ for all $i\in \mathbb{Z},$ and we define the infinite vector  $n^\K=(n^\K_i)_{i\in \mathbb{Z}}.$ We write Equation \eqref{eq:rescaling model} as follows:
\begin{equation}
    \K\frac{d}{dt}n^\K(t)=(D^\K(n^\K(t))+A^\K)n^\K(t).
\end{equation}
Let $T>0.$ We consider the following closed subset of $C( [0,T],\ell^1(\mathbb{Z})):$
\begin{equation}
\mathcal{A}:=\{n\in  C( [0,T],\ell^1(\mathbb{Z})),~ \Vert n(t)\Vert_{\ell^1(\mathbb{Z})}\leq C(t) \Vert n^{\K,0}\Vert_{\ell^1(\mathbb{Z})},~\forall t\in [0,T]\},
\end{equation}
where $C(t):=1+e^{(\overline{R}+\overline{p}\alpha(0))t/\K},$ with $\alpha(.)$  the function defined in Lemma 3.1.\\
The space $C( [0,T],\ell^1(\mathbb{Z}))$ is a Banach space for the norm $$\Vert n\Vert_{L^{\infty}([0,T],\ell^1(\mathbb{Z}))}:=\sup_{t\in[0,T]}\sum_{i\in \mathbb{Z}}\vert n_i(t)\vert.$$
 We define the following mapping:
 \begin{align*}
 \Phi:& \mathcal{A}\mapsto \mathcal{A} \\
 &n \mapsto \Phi(n),
 \end{align*}
where $\Phi(n)$ is the unique solution of \begin{align*}
    \begin{cases}
       \K \frac{d}{dt}\Phi(n)(t)=(\widetilde{D}^\K(n(t))+A^\K)n(t),\\
       (\Phi(n)(0))_i=n^{\K,0}(i\delta_K),~~\forall i\in \Z,
    \end{cases}
\end{align*} 
with $\widetilde{D}^\K$ the infinite diagonal matrix with diagonal elements 
\begin{equation*}
   \widetilde{R}(i\delta_\K,I)= \begin{cases}
   R(i\delta_\K,I_m/2),~~~I\leq I_m/2,\\
    R(i\delta_\K,I),~~~I\in[I_m/2,2I_M],\\
        R(i\delta_\K,2I_M),~~~I\geq 2I_M.
    \end{cases}
\end{equation*}
We prove that $\Phi$ has a unique fixed point in $\mathcal{A}.$
Let us show that $\Phi$ is a mapping from   $\mathcal{A}$ to $\mathcal{A}.$
Indeed, let $n\in \mathcal{A}.$ Then, we have for any $i\in \mathbb{Z},$ and $t\in[0,T]$
\begin{align*}
 \K(\Phi(n)(t))_i&= \K n^{\K,0}(i\delta_\K)+\int_{0}^{t}(\widetilde{D}^\K(n(s))+A^\K)n(s))_ids\\
&=\K n^{\K,0}(i\delta_\K)+\int_{0}^{t}\Big(\widetilde{R}(i\delta_\K,\delta_\K \Vert n(s)\Vert_{\ell^1(\mathbb{Z})}) n_i(s)+ \sum_{j\in \mathbb{Z}}p(j\delta_\K)h_\K G((j-i)h_\K)n_j(s)\Big)ds. 
\end{align*}
 Then
\begin{align*}
      \Vert \Phi(n)(t)\Vert_{\ell^1(\mathbb{Z})}&\leq \Vert n^{\K,0}\Vert_{\ell^1(\mathbb{Z})}+\frac{1}{\K}\int_{0}^{t}\Big(\overline{R} \Vert n(s)\Vert_{\ell^1(\mathbb{Z})}+\overline{p}\sum_{i\in \mathbb{Z}} \sum_{j\in \mathbb{Z}}h_\K G((j-i)h_\K)n_j(s)\Big)ds\\
     &\leq (1+(\overline{R}+\overline{p}\alpha(0))C(t)t\frac{1}{\K})\Vert n^{\K,0}\Vert_{\ell^1(\mathbb{Z})}.
    \end{align*}
Therefore, for $T$ small enough, we have for all $t\in[0,T],$ that $\big(1+(\overline{R}+\overline{p}\alpha(0))C(t)t\frac{1}{\K}\big)\leq C(t).$ This yields $$\Vert \Phi(n)(t)\Vert_{\ell^1(\mathbb{Z})}  \leq  C(t) \Vert n^{\K,0}\Vert_{\ell^1(\mathbb{Z})},~~~\forall t\in[0,T].$$ 
Let us now show that $\Phi$ is a  contraction.\\ Let $n,m\in \mathcal{A},$
we have for any $i\in\mathbb{Z},$ and $t\in [0,T]$
\begin{align*}
    \K \Vert \Phi (n)(t)-\Phi (m)(t)\Vert_{\ell^1(\mathbb{Z})}&\leq\int_{0}^{t} \Big(\overline{R}+A_1\Vert n(t)\Vert_{\ell^1(\mathbb{Z})} )\sum_{i\in \mathbb{Z}} \vert n_i(s)-m_i(s)\vert +\overline{p}\sum_{i\in \mathbb{Z}} \sum_{j\in \mathbb{Z}}h_\K G((j-i)h_\K)\\
    &~~~~~~~~~~~~~~~~~~~~~~~~~~~~~~~~~~~~~~~~~~~~~~~~~~~~~~~~~~~~~~~~~~~~~~~~ \vert n_j(s)-m_j(s)\vert\Big) ds\\
    &\leq  (\overline{R}+A_1\Vert n(t)\Vert_{\ell^1(\mathbb{Z})}+ \overline{p}\alpha(0))T \Vert n-m\Vert_{L^{\infty}([0,T],\ell^1(\mathbb{Z}))}\\
    &\leq \big(\overline{R}+A_1C(t)\Vert n^{\K,0}\Vert_{\ell^1(\mathbb{Z})}+ \overline{p}\alpha(0)\big)T \Vert n-m\Vert_{L^{\infty}([0,T],\ell^1(\mathbb{Z}))}.
\end{align*}
 Note that, from \eqref{eq:sublinear growth initial conditon}, we have $\Vert n^{\K,0}\Vert_{\ell^1(\mathbb{Z})}\leq C_\K,$ for some positive constant $C_\K.$ Therefore, $\big(\overline{R}+A_1C(T)C_\K+ \overline{p}\alpha(0)\big)T/\K$ tends to zero as $T$ tends to zero.
Then, for $T$ small enough, $\Phi$ is a contraction in $\mathcal{A}$. Consequently $\Phi$ has a unique fixed point which is the unique solution for Equation \eqref{eq:rescaling model} in $[0,T]\times \Z$ with $\widetilde{R}$ instead of $R$.
By iteration in time, we can deduce that the solution exists globally.\\
   Let us now prove \eqref{eq:bound of totale size}. This proof is an adaptation of the proof of Theorem 2.4 in \cite{BMP}. 
We define for all $t>0,$ $J^\K(t)=\frac{d I^\K(t)}{dt},$
then 
\begin{align*}
    J^\K(t)&=  \sum_{i\in \Z}\delta_\K \frac{d}{dt}n^\K_i(t)
    \\ 
    &=\frac{1}{\K} \Big[\sum_{i\in \Z}\delta_\K \widetilde{R}(i\delta_\K,I^\K(t))n^\K_i(t)+\sum_{i\in \Z}\sum_{j\in \Z}\delta_\K p(j\delta_\K)h_\K G((j-i)h_\K)n^\K_j(t)\Big] \\
     &=\frac{1}{\K} \Big[\sum_{i\in \Z}\delta_\K n^\K_i(t)\big(\widetilde{R}(i\delta_\K,I^\K(t))+p(i\delta_\K)\sum_{j\in \Z} h_\K G(jh_\K) \big)\Big].
\end{align*}
If $I^\K(t)\geq I_M+C C(\K) $ where $C(\K)=\vert \sum_{j\in \Z} h_\K G(jh_\K)-1\vert,$ we deduce from Assumptions \ref{item:1} and \ref{item:3}, that for all $i\in \mathbb{Z}$
\begin{equation*}
\widetilde{R}(i\delta_\K,I^\K(t))+p(i\delta_\K)\leq -A_3C C(\K).
\end{equation*}
Then
\begin{align*}
\widetilde{R}(i\delta_\K,I^\K(t))+p(i\delta_\K)\sum_{j\in \Z} h_\K G(jh_\K)\leq (\overline{p}-A_3C)C(\K).
\end{align*}
Therefore, for $C> \overline{p}/A_3,$ we have 
$$\frac{dI^\K(t)}{dt}<0.$$
We conclude the proof of the upper bound. By similar arguments, we obtain  the lower bound. Hence Theorem 3.2 has been proved. 
\subsection*{Proof of Proposition 3.3}
    Using the notation and the computations in the above proof, 
    we have 
\begin{equation}
\label{eq:J}
    J^\K(t)=\frac{1}{\K} \Big[\sum_{i\in \Z}\delta_\K n^\K_i(t)\big(R(i\delta_\K,I^\K(t))+ p(i\delta_\K)\sum_{J\in \mathbb{Z}}h_\K G(jh_\K)\big)\Big].
\end{equation}
We set $C(\K)=\sum_{j\in\mathbb{Z}}h_\K G(jh_\K).$ Then
\begin{align*}
    \frac{dJ^\K(t)}{dt}&=\frac{1}{\K} \Big[\sum_{i\in \Z}\delta_\K n^\K_i(t)\partial_IR(i\delta_\K,I^\K(t))J^\K(t)
    +\delta_\K \frac{dn^\K_i(t)}{dt}\big( R(i\delta_\K,I^\K(t))+C(\K)p(i\delta_\K) \big)\Big]\\
    &=\frac{1}{\K} \Big[\sum_{i\in \Z}\delta_\K n^\K_i(t)\partial_IR(i\delta_\K,I^\K(t))J^\K(t)\Big]\\
    & \hspace{4cm} +\frac{1}{\varepsilon^2}\Big[\sum_{i\in \Z}\delta_\K( R(i\delta_\K,I^\K(t))+C(\K)p(i\delta_\K))R(i\delta_\K,I^\K(t))n^\K_i(t)\\
    &\hspace{2cm} +\sum_{i\in \Z} \delta_\K (R(i\delta_\K,I^\K(t))+C(\K)p(i\delta_\K))
    \sum_{j\in \Z} p(j\delta_\K)h_\K G((j-i)\delta_\K)n^\K_j(t)\Big]\\
    &=\frac{1}{\K} \sum_{i\in \Z}\delta_\K n^\K_i(t)\partial_IR(i\delta_\K,I^\K(t))J^\K(t)+\frac{1}{\varepsilon^2}\sum_{i\in \Z}\delta_\K (R(i\delta_\K,I^\K(t))+C(\K)p(i\delta_\K))^2\\
&~~~~n^\K_i(t)+\frac{1}{\varepsilon^2}\sum_{i\in \mathbb{Z}}\sum_{j\in \mathbb{Z}} \delta_\K (R(i\delta_\K,I^\K(t))+C(\K)p(i\delta_\K)-(R(j\delta_\K,I^\K(t))+C(\K)p(j\delta_\K)))\\
    &\hspace{6cm} p(j\delta_\K)h_\K G((j-i)h_\K )n^\K_j(t).
\end{align*}
Moreover, we have for any smooth function $h$
$$h(i\delta_\K)=h(j\delta_\K)+D_xh(j\delta_\K)(j-i)\delta_\K+\frac{((j-i)\delta_\K)^2}{2}D_{xx}h(x_{i,j}),$$
where $x_{i,j}\in \R.$
Then, using the fact that $G$ is an even function, Assumption \ref{item:2} and \eqref{eq:bound of totale size}, we deduce that for $\K$ small enough
\begin{align*}
\frac{1}{\varepsilon^2}\sum_{i\in \mathbb{Z}}&\sum_{j\in \mathbb{Z}} \delta_\K (R(i\delta_\K,I^\K(t))+C(\K)p(i\delta_\K)-(R(j\delta_\K,I^\K(t))+C(\K)p(j\delta_\K)))\\
    &p(j\delta_\K)h_\K G((j-i)\delta_\K)n^\K_j(t)\geq - I_M\overline{p}(\overline{R}+2\overline{p})\int_{\R}x^2G(x)dx.
 \end{align*}
Therefore,
\begin{equation}
    \frac{dJ^\K(t)}{dt}\geq -\frac{C}{\K}J^\K(t)-C_1,
\end{equation}
where $C$ and $C_1$ are positive constants. We define the negative part of $J^\K(t)$ as $(J^\K(t))_{-}=\max(0,-J^\K(t)).$
Thus,
    \begin{equation}
        \frac{d(J^\K(t))_{-}}{dt}\leq -\frac{C}{\K}(J^\K(t))_{-}+C_1.
    \end{equation}
  Moreover, by \eqref{eq:J}  we have 
  $\vert J^\K(0)\vert\leq \frac{C_2}{\K},$ for some positive constant $C_2.$ We deduce that 
  \begin{equation}
  \label{eq:negative part}
      (J^\K(t))_{-}\leq \frac{C_1}{C} \K+(J^\K(0))_{-}e^{-\frac{Ct}{\varepsilon}}\\
      \leq \frac{C_1}{C}\K+\frac{C_2}{\K}e^{-\frac{Ct}{\K}}.
 \end{equation}

We know that $\vert J^\K(t)\vert=J^\K(t)+2(J^\K(t))_{-}.$ Then for all $T>0,$ we have 
\begin{equation*}
   \int_{0}^{T} \Big\vert \frac{d I^\K(t)}{dt}\Big\vert dt =  \int_{0}^{T}  \frac{d I^\K(t)}{dt}+2\int_{0}^{T} \big(\frac{d I^\K(t)}{dt}\big)_{-}dt\leq 2I_M+2CT.
\end{equation*}
We conclude that the sequence $(I^\K)_{\K}$ has bounded variation, and we deduce the convergence almost everywhere along a subsequence of $\K$ to a function of bounded variation $I,$ which by \eqref{eq:negative part} is non-decreasing on $(0,+\infty).$ 
\hfill $\blacksquare$
\section*{Appendix B} 
\textbf{Proof of the maximum property for supersolutions with concave Hamiltonians.}\\
Here we state and prove the result for  time-independent Hamiltonians. The result is similar for Hamiltonians with time dependence of the form $b(t)+H(x,q),$ where $b\in L^1_{Loc}(\R^+),$ and no continuity assumption is required for $b$.
\begin{lem}
 Let $\Omega$ be an open convex subset of $\R$, and let $H: \R \times \Omega \to \R$ be a continuous Hamiltonian that is uniformly Lipschitz-continuous in space and concave with respect to the gradient variable. Let $u_1$ and $u_2$ be two Lipschitz-continuous in time and space, viscosity supersolutions of the problem 
\begin{equation}
\label{eq:maximum of viscosity super solution}
\partial_t u+H(x,Du)=0,
\end{equation} 
 such that almost everywhere $Du_i\in \Omega,$ for $i=1,2.$ Then, the function $v=\max(u_1,u_2)$  is also a viscosity supersolution of \eqref{eq:maximum of viscosity super solution}.
\end{lem}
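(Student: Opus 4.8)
The plan is to prove this lemma in two main steps: first that $v=\max(u_1,u_2)$ satisfies the equation almost everywhere, and then that this a.e.\ information can be upgraded to the viscosity sense precisely because $H$ is concave in the gradient; throughout one must keep track of the fact that $H$ is only defined on $\Omega$.

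\textbf{Step 1 (a.e.\ supersolution).} The function $v$ is Lipschitz in $(t,x)$, hence differentiable a.e.\ by Rademacher's theorem. I would first record the elementary fact that a Lipschitz viscosity supersolution $w$ of $\partial_t u+H(x,Du)=0$ is an a.e.\ classical supersolution: at a point of differentiability the space--time differential $\big(\partial_t w(t,x),Dw(t,x)\big)$ belongs to the subdifferential $D^-w(t,x)$ (immediate from the definition of $D^-$), so the supersolution property yields $\partial_t w(t,x)+H\big(x,Dw(t,x)\big)\ge0$ for a.e.\ $(t,x)$, namely wherever $Dw(t,x)\in\Omega$. Applying this to $u_1$ and $u_2$, I claim $v$ is again an a.e.\ supersolution. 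On the open set $\{u_1\ne u_2\}$, $v$ locally coincides with $u_1$ or with $u_2$, so the conclusion is immediate. On $E:=\{u_1=u_2\}$, Lebesgue's density theorem shows that a.e.\ point of $E$ is a density point; at such a point, assuming also (true a.e.) that $u_1,u_2,v$ are differentiable, the vanishing of $u_1-u_2$ on a set of density one forces $D(u_1-u_2)=0$ there, and then $v\ge u_1$ with equality forces $\big(\partial_tv,Dv\big)=\big(\partial_tu_1,Du_1\big)$; hence $v$ inherits the pointwise inequality and $Dv\in\Omega$. Thus $\partial_tv(t,x)+H\big(x,Dv(t,x)\big)\ge0$ for a.e.\ $(t,x)$.

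\textbf{Step 2 (upgrading via concavity).} Let $\phi\in C^1$ and let $(t_0,x_0)$ be a local minimum of $v-\phi$, so $\big(\partial_t\phi,D\phi\big)(t_0,x_0)\in D^-v(t_0,x_0)$. Since $v$ is Lipschitz, $D^-v(t_0,x_0)$ is contained in Clarke's generalized gradient $\partial_Cv(t_0,x_0)$, the closed convex hull of the limits $\lim_k\big(\partial_tv(z_k),Dv(z_k)\big)$ over sequences $z_k\to(t_0,x_0)$ of differentiability points of $v$; moreover one may discard in advance the null set of points excluded in Step~1. Writing $\big(\partial_t\phi,D\phi\big)(t_0,x_0)=\sum_j\lambda_j(a_j,p_j)$ as a convex combination of such limit points — legitimate by Carathéodory's theorem in $\R^2$, after a harmless further limit passage if needed — each pair $(a_j,p_j)$ satisfies $a_j+H(x_0,p_j)\ge0$ by letting $k\to\infty$ in Step~1 and using the continuity of $H$. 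Since $\Omega$ is convex, $D\phi(t_0,x_0)=\sum_j\lambda_jp_j\in\Omega$, and concavity of $p\mapsto H(x_0,p)$ gives
\[
\partial_t\phi(t_0,x_0)+H\big(x_0,D\phi(t_0,x_0)\big)=\sum_j\lambda_j a_j+H\Big(x_0,\sum_j\lambda_j p_j\Big)\ \ge\ \sum_j\lambda_j\big(a_j+H(x_0,p_j)\big)\ \ge\ 0,
\]
which is exactly the supersolution inequality at $(t_0,x_0)$.

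\textbf{Main obstacle.} The delicate point is the limit passage in Step~2 in relation to the domain of $H$: one needs the limiting gradients $p_j$ to lie inside $\Omega$ (in a region where $H$ is continuous), whereas a priori they only lie in $\overline\Omega$. This is automatic in the situation where the lemma is applied, since there the obstacle part of \eqref{eq:almost everywhere} forces $|\nabla u_i|\le\mu$ with $\mu<1$, so the relevant gradients remain in the compact set $\overline{B(0,\mu)}$, strictly inside the ball $\{|p|<1\}$ on which the Hamiltonian \eqref{eq:Hamiltonian} is finite and continuous; one then takes $\Omega$ a slightly larger ball. The Lipschitz-in-space hypothesis on $H$ serves to guarantee the continuity of $x\mapsto H(x,p)$ used in this passage. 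The remaining ingredients — Rademacher's and Lebesgue's theorems, the inclusion $D^-v\subseteq\partial_Cv$, and Clarke's representation of $\partial_Cv$ — are standard nonsmooth analysis; I stress that the concavity in Step~2 is genuinely needed, as for convex $H$ the statement fails (e.g.\ $v=\max(x,-x)$ is not a supersolution of $\partial_t u+|Du|-1=0$, although $x$ and $-x$ are solutions).
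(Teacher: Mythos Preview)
Your argument is correct and takes a genuinely different route from the paper's. The paper first mollifies $u_1,u_2$ by convolution to obtain $C^1$ functions $u_i^\varepsilon$; the uniform Lipschitz continuity of $H$ in $x$ together with Jensen's inequality (concavity in $p$) shows each $u_i^\varepsilon$ is a classical supersolution up to an $o(1)$ error. For $v^\varepsilon=\max(u_1^\varepsilon,u_2^\varepsilon)$ the subdifferential at any point is then exactly the segment $\mathrm{Co}\big(Du_1^\varepsilon,Du_2^\varepsilon\big)$ --- which the paper proves by a short Hahn--Banach separation argument --- and concavity gives the supersolution inequality for $v^\varepsilon$; one concludes by stability of viscosity supersolutions as $\varepsilon\to0$. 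You instead work directly on the Lipschitz function $v$, establishing the a.e.\ inequality via Rademacher and Lebesgue density, and then upgrading through the inclusion $D^-v\subseteq\partial_C v$ and Clarke's representation of $\partial_C v$ as the convex hull of limiting gradients. Your route is more direct and shows that the Lipschitz-in-$x$ hypothesis on $H$ is not really needed (mere continuity suffices for your limit passage in Step~2), whereas in the paper it is used essentially to control the convolution error $H(x,Du_i(s,y))-H(y,Du_i(s,y))$. On the other hand, the paper's approach is self-contained, avoiding the Clarke calculus, and makes the structure of the subdifferential of a maximum of two smooth functions completely explicit. Your flag about the limiting gradients $p_j$ possibly landing on $\partial\Omega$ is well placed; in the paper's scheme the same concern is absorbed by the convexity of $\Omega$ (so that $Du_i^\varepsilon\in\Omega$ after convolution), and in the intended application both approaches are saved by the uniform bound $|\nabla u_i|\le\mu<1$.
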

\begin{proof}
Let $u_1$ and $u_2$ be two viscosity supersolutions of \eqref{eq:maximum of viscosity super solution}. 
We prove that $v=\max(u_1,u_2)$  is also a viscosity supersolution of \eqref{eq:maximum of viscosity super solution}. 
Using similar arguments as in the proof of Proposition 5.4, we regularize these solutions using a convolution approximation. We denote by $u_1^\eta$ and $u_2^\eta$ the regularized functions. By the uniform Lipschitz continuity and concavity of the Hamiltonian, we obtain for $i=1,2$ and for all $(t,x)\in (0,+\infty)\times \R$
\begin{equation}
\label{eq: perturbed eq}
  \partial_t u_i^\eta+H(x,D  u_i^\eta)\geq \int_{\mathbb{R}_+\times \R}\rho_\eta(t-s,x-y)(H(x,Du_i(s,y))-H(y,Du_i(s,y)))dsdy\geq o(1),
\end{equation}
where $\rho_\eta$ is a regularized function defined in Step 4 in the proof of Proposition  5.4 and $o(1)$ is a negligible constant of $\eta$ and independent of $(t,x)$ and $i.$ 
Let us prove  that $v^\eta=\max(u^\eta_1,u^\eta_2)$  is a viscosity supersolution of \eqref{eq: perturbed eq}.\\
Let $(t_0,x_0)\in (0,+\infty)\times \mathbb{R}$  and $(p,q)\in D_{-} v^\eta(t_0,x_0),$ where $D_{-}v^\eta$ is the subdifferential of $v^\eta$ (see \cite{G}). Then, there exists $\alpha \in [0,1]$ such that $$(p,q)=(\alpha \partial_t u^\eta_1(t_0,x_0)+(1-\alpha) \partial_t u^\eta_2(t_0,x_0),\alpha D u^\eta_1(t_0,x_0)+(1-\alpha) Du^\eta_2(t_0,x_0)).$$
The proof of this property is given after this proof. Then, by the concavity of the Hamiltonian, we obtain  
\begin{equation*}
    p+H(x_0,q)\geq \alpha( \partial_t u^\eta_1(t_0,x_0)+H(x_0,D u^\eta
    _1(t_0,x_0)))+(1-\alpha) (\partial_t u^\eta_2(t_0,x_0) +H(x_0,D u^\eta
    _2(t_0,x_0)))\geq o(1).
\end{equation*}
Thus, $v^\eta$  is a viscosity supersolution of \eqref{eq: perturbed eq}. Therefore, by the stability of viscosity solutions, we deduce that $v$ is a viscosity supersolution of \eqref{eq:maximum of viscosity super solution}.\\

We now prove that for $f=\max(f_1,f_2),$ where $f_1$ and $f_2$ are differentiable functions on $\mathbb{R}^2$, the subdifferential of $f$ satisfies at any point $x\in \mathbb{R}^2$
\begin{equation}
    D_{-}f(x)=\mathrm{Co}(Df_1(x),Df_2(x)),
\end{equation}
where $\mathrm{Co}(Df_1(x),Df_2(x))$ is the convex hull of $Df_1(x)$ and $Df_2(x).$\\
Let $x\in \mathbb{R}^2.$ The case where the maximum is defined by $f_1$ only or $f_2$ only is obvious, because if for example  $f_1(x)>f_2(x),$ we have $D_{-}f(x)=\{Df_1(x)\}.$\\
So we assume that $f_1(x)=f_2(x).$ By contradiction, we assume that there exists  $q\in D_{-}f(x)$ such that $q\notin \mathrm{Co}(Df_1(x),Df_2(x)).$ The set $\mathrm{Co}(Df_1(x),Df_2(x))$ is  convex and compact, so by the Hahn-Banach theorem, there exists $\xi\in \mathbb{R}^2$  such that
\begin{equation}
\label{eq:stricly separation}
\langle q,\xi\rangle > \langle y,\xi\rangle,~~~\forall y\in \mathrm{Co}(Df_1(x),Df_2(x)). 
\end{equation}
Let $\lambda>0$, we consider $z=x+\lambda \xi.$ Since $q\in D_{-}f(x),$ we have  for $\lambda$ small enough
\begin{align*}
    f(z)-f(x)\geq \langle q,z-x\rangle+o(\Vert z-x\Vert).
\end{align*}
We use the Taylor expansion at $x+\lambda \xi$ of $f_1$ and $f_2$ and let $\lambda$ tend to zero and the fact that $f(x)=f_1(x)=f_2(x),$ to deduce
\begin{equation*}
     \max(\langle Df_1(x),\xi\rangle,\langle Df_2(x),\xi\rangle)\geq \langle q,\xi \rangle. 
\end{equation*}
This is in contradiction with \eqref{eq:stricly separation}.
\end{proof}

\textbf{Acknowledgements:} The author is very grateful to Sepideh Mirrahimi  for  valuable discussions and continuous support in this work, to Nicolas Champagnat  and Sylvie Méléard for their fruitful discussions and comments on drafting this paper, and to Charles Bertucci for  fruitful discussions on maximum principle. This work is funded by the European Union (ERC, SINGER, 101054787). Views and opinions expressed are however those of the author only and do not necessarily reflect those of the European
Union or the European Research Council. Neither the European Union nor the granting authority
can be held responsible for them. This work has also been supported by the Chair "Modélisation Mathématique et Biodiversité" of Veolia Environnement-École Polytechnique-Museum National
d’Histoire Naturelle-Fondation X.

\newpage
\nocite{*}
\bibliographystyle{plain}
\bibliography{Rapport.bib}

\end{document}